\def\R{\mathbb{R}}
\def\N{\mathbb{N}}
\def\tto{\rightrightarrows}
\newtheorem{definition}{Definition}[section]
\newtheorem{theorem}[definition]{Theorem}
\newtheorem{lemma}[definition]{Lemma}
\newtheorem{proposition}[definition]{Proposition}
\newtheorem{corollary}[definition]{Corollary}
\newtheorem{remark}[definition]{Remark}
\newtheorem{conjecture}[definition]{Conjecture}
\newenvironment{enum}
{\begin{enumerate}[topsep=1pt,parsep=0pt,itemsep=1pt]}{\end{enumerate}}
\begin{document}

\title{\bf Global convergence of a non-convex Douglas-Rachford iteration}
\author{\and Francisco J. Arag\'on Artacho\thanks{Centre for Computer Assisted Research Mathematics and its Applications (CARMA), University of Newcastle, Callaghan, NSW 2308, Australia.
\url{francisco.aragon@ua.es}}
\and Jonathan M. Borwein\thanks{Centre for Computer Assisted Research Mathematics and its Applications (CARMA), University of Newcastle, Callaghan, NSW 2308, Australia.
\url{jonathan.borwein@newcastle.edu.au}. Distinguished Professor King Abdul-Aziz Univ, Jeddah.}
}

\date{\today} \maketitle

\begin{abstract}
We establish a region  of convergence for the  proto-typical non-convex Douglas-Rachford iteration which finds a point on the intersection of a line and a circle. Previous work on the non-convex iteration \cite{BS11} was only able to establish local convergence, and was ineffective in that no explicit region of convergence could be given.


\end{abstract}

\section{Introduction}

The Douglas-Rachford algorithm is an iterative method for finding a point in the intersection of two (or more) closed sets. It is well-known that the iteration (weakly) converges
when it is applied to convex subsets of a Hilbert space (see e.g.~\cite[Fact~5.9]{BCL02} and the references therein). Despite the absence of a theoretical justification, the
algorithm has also been successfully applied to various non-convex practical problems, see e.g.~\cite{ERT07,GE08}.

An initial step towards providing some theoretical explanation of the
convergence in the non-convex case can be found in~\cite{BS11}, where the authors study a prototypical non-convex two-set scenario in which one of the sets is the Euclidean sphere
and the other is a line (or more generally, a proper affine subset). Similar to the convex case, Borwein and Sims prove local convergence of the algorithm to a point whose
projection into any of the sets gives a point in the intersection, whenever the sets intersect at more than one point and to a point outside the set in the tangential case (otherwise, the scheme diverges).
 Our aim herein is to extend their local result to a global one.

\begin{figure}[htb]
\begin{center}
\includegraphics[width=8cm]{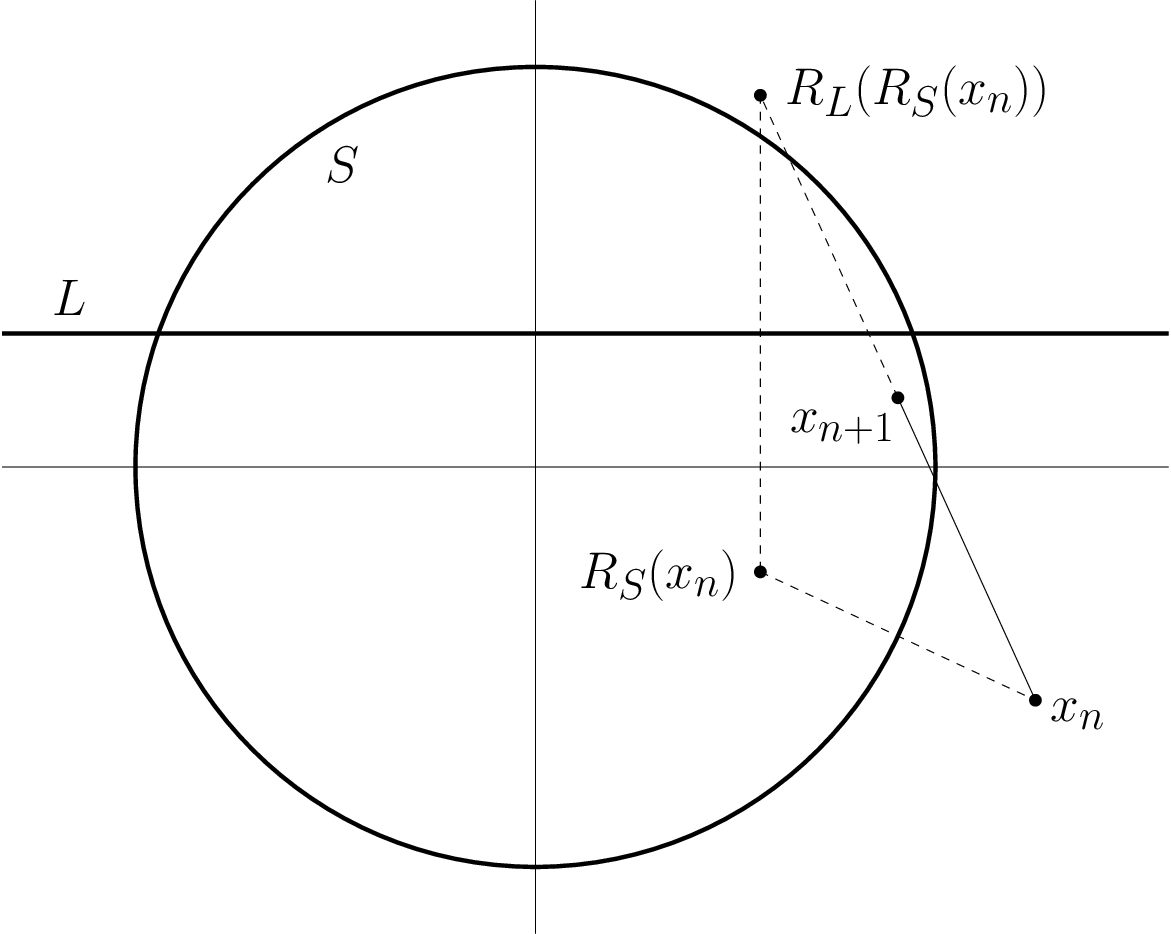}
\end{center}
\caption{Construction of the next iterate of the Douglas-Rachford scheme when applied to a sphere $S$ and a line $L$.}
\label{fig:construction}
\end{figure}

Given two closed subsets $A$ and $B$ of a Hilbert space $X$, the Douglas-Rachford scheme consists of first reflecting the current iteration in one of the two sets,  then reflecting the resulting
point in the other set, and then taking the average with the current iterate to form the next step (see Figure~\ref{fig:construction}). The \emph{reflection} of a point $x\in X$
in the set $A$ can be defined as
$$R_A(x):=2P_A(x)-x,$$
where $P_A(x)$ is the \emph{closest point projection} of the point $x$ in $A$, that is,
$$P_A(x):=\left\{z\in A\colon \|x-z\|=\inf_{a\in A}\|x-a\|\right\}.$$
In general, the projection $P_A: X\tto A$ is a set-valued mapping.

 If $A$ is convex, the projection is uniquely defined for every point in $X$,  thus yielding a single-valued mapping (see e.g.~\cite[Theorem~4.5.1]{BZ05}). The \emph{Douglas-Rachford iterative scheme} is defined as
\begin{equation}\label{DR_general}
x_{n+1}:=T_{A,B}(x_n),
\end{equation}
with $T_{A,B}:=\frac{1}{2}(R_B R_A+I)$, where $I$ is the identity map. Therefore, when the sets $A$ and $B$ are both convex, the iteration~\eqref{DR_general} is uniquely defined. Furthermore, if $A\cap B\neq\emptyset$, the sequence is weakly convergent to a fixed point $\bar x$ of the mapping $T_{A,B}$. Then, one can obtain a point in the intersection of $A$ and $B$ by projecting the point $\bar x$ in
the set $A$, since
\begin{align*}
T_{A,B}(\bar x)=\bar x&\iff \bar x=2P_B(2P_A(\bar x)-\bar x)-2P_A(\bar x)+\bar x\\
&\iff P_A(\bar x)=P_B(2P_A(\bar x)-\bar x),
\end{align*}
which implies $P_A(\bar x)\in A\cap B$.

Weak convergence of the algorithm comes from the fact that the projection mapping is \emph{firmly nonexpansive}, that is,
$$\|P_A(x)-P_A(y)\|^2+\|(I-P_A)(x)-(I-P_A)(y)\|^2\leq\|x-y\|^2,\quad\text{for all }x,y\in X,$$
see e.g.~\cite[Theorem~12.2]{GK90}, which implies that the reflection map is \emph{nonexpansive},
$$\|R_A(x)-R_A(y)\|\leq\|x-y\|,\quad\text{for all }x,y\in X,$$
whence, $T_{A,B}$ is firmly nonexpansive, see e.g.~\cite[Theorem~12.1]{GK90} or~\cite[Lemma~1]{LM79}, and this implies
the weak convergence of the iterative scheme~\eqref{DR_general}, see~\cite[Theorem~1]{O67}.

As in~\cite{BS11}, we restrict our study to the non-convex case of the intersection of a sphere $S:=\{x\in X\colon\|x\|=1\}$ and a line $L:=\{x=\lambda a+\alpha b\colon\lambda\in\R\}$ where, without loss of generality, one can take $\|a\|=\|b\|=1$, with $a$ orthogonal to $b$, and $\alpha>0$. If $X$ is $N$-dimensional, and $(x(1),x(2),\ldots,x(N))$ denotes the coordinates of $x$ relative to an orthonormal basis whose first two elements are respectively $a$ and $b$, the Douglas-Rachford iteration~\eqref{DR_general} becomes,
\begin{equation}\label{DR}\begin{array}{l}
x_{n+1}(1)=x_n(1)/\rho_n,\\
x_{n+1}(2)=\alpha+(1-1/\rho_n)x_n(2), \text{ and}\\
x_{n+1}(k)=(1-1/\rho_n)x_n(k),\text { for } k=3,\ldots,N,
\end{array}
\end{equation}
where $\rho_n:=\|x_n\|:=\sqrt{x_n(1)^2+\ldots+x_n(N)^2}$, see~\cite{BS11} for details. Borwein and Sims prove the next local convergence result.
\begin{theorem}[{\cite[Theorem~2]{BS11}}]\label{th:local}
  If $0\leq\alpha<1$ then the Douglas-Rachford scheme~\eqref{DR} is locally convergent at each of the points $\pm\sqrt{1-\alpha^2}a+\alpha b$.
\end{theorem}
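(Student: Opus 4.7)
My plan is to treat the two candidate attractors $x^\pm:=\pm\sqrt{1-\alpha^2}\,a+\alpha b$ as fixed points of the iteration map $T=T_{S,L}$, to compute its Jacobian there, and to invoke the standard local-attraction theorem whenever the spectral radius is strictly less than one.

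First I will locate the fixed points of $T$. Reading $x=T(x)$ off the formulae~\eqref{DR}, the third and higher coordinates immediately force $x(k)=0$, the second coordinate forces $x(2)=\alpha\rho$, and the first coordinate splits into the alternatives $x(1)=0$ or $\rho=1$. The alternative $x(1)=0$ reduces to $\rho=|\alpha|\rho$, which has no nonzero solution when $\alpha<1$, while $\rho=1$ gives $x(2)=\alpha$ and $x(1)^{2}=1-\alpha^{2}$. So the fixed points of $T$ coincide with the two intersection points $x^\pm\in S\cap L$ named in the statement.

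Because $\|x^\pm\|=1$, the map $T$ is real-analytic on a neighbourhood of $x^\pm$, so I can differentiate~\eqref{DR} coordinate-wise using $\partial\rho/\partial x(j)=x(j)/\rho$. I expect $DT(x^\pm)$, in the given orthonormal basis, to be block-diagonal: a $2\times 2$ block on $\mathrm{span}(a,b)$ equal to $\bigl(\begin{smallmatrix} \alpha^{2} & -s\alpha \\ s\alpha & \alpha^{2} \end{smallmatrix}\bigr)$, with $s=\pm\sqrt{1-\alpha^{2}}$, and the zero matrix on the orthogonal complement (the coefficient $1-1/\rho$ vanishes at $\rho=1$, and every cross-term for $k\ge 3$ carries a factor $x(k)=0$). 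A short calculation then gives this block the characteristic polynomial $\lambda^{2}-2\alpha^{2}\lambda+\alpha^{2}$, hence complex-conjugate eigenvalues $\alpha^{2}\pm i\alpha\sqrt{1-\alpha^{2}}$ of common modulus $\alpha$; the spectral radius of $DT(x^\pm)$ is therefore $\alpha$.

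Since $0\le\alpha<1$, Ostrowski's theorem on locally attractive fixed points then supplies a neighbourhood $U^\pm$ of $x^\pm$ on which $T$ is a strict contraction in a suitably adapted norm, so every orbit started in $U^\pm$ converges to $x^\pm$. I do not anticipate any genuinely hard step: the content is concentrated in the eigenvalue computation, and the bound $\alpha<1$ matches the hypothesis of the theorem exactly. The one technical caveat worth flagging but not belabouring is that $T$ is singular at the origin; this is harmless because $\|x^\pm\|=1$ and $U^\pm$ can be shrunk to stay bounded away from~$0$.
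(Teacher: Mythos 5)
The paper does not prove this statement---it is imported verbatim from \cite[Theorem~2]{BS11}---so there is no internal proof to compare against, but your argument is correct and is essentially the one given in that reference: the fixed points of $T_{S,L}$ are exactly $\pm\sqrt{1-\alpha^2}\,a+\alpha b$, the Jacobian there is block-diagonal with the $2\times2$ block having eigenvalues $\alpha^{2}\pm i\alpha\sqrt{1-\alpha^{2}}$ of modulus $\alpha$ and the zero map on $\mathrm{span}(a,b)^{\perp}$, and Ostrowski's theorem then yields local attraction since $0\le\alpha<1$. Your caveat about smoothness is handled correctly, as $\rho=1$ at both fixed points keeps the iteration map real-analytic in a neighbourhood.
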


\noindent Moreover, Borwein and Sims also conjecture the following:

\begin{conjecture}[{\cite[Conjecture 1]{BS11}}]\label{conj:global}
  In the simple example of a sphere and a line with two intersection points, the basin of attraction is the two open half-spaces forming the complement of the singular manifold $\langle x,a\rangle =0$.
\end{conjecture}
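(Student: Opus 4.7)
Write $u_n:=x_n(1)$, $v_n:=x_n(2)$ and $w_n:=(x_n(3),\ldots,x_n(N))$, so \eqref{DR} reads $u_{n+1}=u_n/\rho_n$, $v_{n+1}=\alpha+(1-1/\rho_n)v_n$, $w_{n+1}=(1-1/\rho_n)w_n$. The plan is to reduce to the planar case by controlling $w_n$, and then to show that every planar orbit starting in $\{u>0\}$ enters an arbitrarily small neighbourhood of $p^+:=\sqrt{1-\alpha^2}\,a+\alpha b$; at that point Theorem~\ref{th:local} closes the argument (the half-space $\{u<0\}$ being handled by the symmetry $a\mapsto -a$). Basic preliminaries: $\{u>0\}$ is forward invariant because $u_{n+1}=u_n/\rho_n$; the origin is never reached from this set, so $T_{S,L}$ is continuous along the orbit; and the only fixed point of $T_{S,L}$ in $\{u>0\}$ is $p^+$ itself, since $T_{S,L}(x)=x$ forces $\rho=1$, $v=\alpha$ and $x(k)=0$ for $k\geq 3$.

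\textbf{Reduction to the plane.} Squaring the three recurrences gives the identity
\[
\rho_{n+1}^2 = 1 + (\alpha+v_n)^2 + \|w_n\|^2 - \frac{2}{\rho_n}\bigl(v_n(\alpha+v_n) + \|w_n\|^2\bigr),
\]
which together with $u_{n+1}=u_n/\rho_n$ should yield uniform a priori bounds $0<c\leq u_n\leq\rho_n\leq C$ and, after a finite transient, $\rho_n>\tfrac12$: an upper bound on $\rho_n$ follows because $u_{n+1}=u_n/\rho_n$ forces $u$ to become small whenever $\rho$ is large, which through the identity above then forces $\rho_{n+1}$ back down; a lower bound has to be extracted more delicately, exploiting that $u_{n+1}$ becomes large whenever $\rho_n$ is small. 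Once $\rho_n\geq c'>\tfrac12$ holds uniformly, $w_{n+1}=(1-1/\rho_n)w_n$ shows $\|w_n\|$ decays geometrically, and the $(u_n,v_n)$ iteration is a summable perturbation of the planar scheme.

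\textbf{Planar dynamics and main obstacle.} The crux is to find a Lyapunov-like quantity in the plane. The obvious candidates---squared distance to $p^+$, or $(\rho-1)^2+\lambda(v-\alpha)^2$---are not monotone along $T_{S,L}$ because $\rho_n$ oscillates across $1$, as simple numerical experiments confirm. The main avenues I would try are: a quantity of the form $\phi(\rho)+\psi(v-\alpha)$ for well-chosen convex $\phi,\psi$ vanishing at $1$ and $0$ respectively; a quantity monotone along the two-step map $T_{S,L}^2$ rather than $T_{S,L}$; or a piecewise argument in which the invariant half-plane is partitioned by the signs of $\rho-1$ and $v-\alpha$, a simpler monotone quantity is exhibited on each piece, and the passages between pieces are controlled separately. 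Finding the right monotone quantity is the central obstacle, and is precisely what distinguishes this genuinely non-convex setting from the convex case, where firm nonexpansiveness does everything for free. Once such a quantity is in hand the rest is routine: bounded orbits have cluster points, cluster points are fixed points of $T_{S,L}$ by continuity, the only fixed point in $\{u>0\}$ is $p^+$, and Theorem~\ref{th:local} upgrades subsequential convergence to convergence of the full sequence.
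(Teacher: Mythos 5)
The statement you were asked about is a \emph{conjecture}, and the paper does not prove it: the authors explicitly say their objective is only to ``make progress'' on it, and their actual result (Theorem~\ref{th:main}) is confined to $N=2$, $\alpha=1/\sqrt{2}$, and initial points in $[\varepsilon,1]\times[0,1]$ with $\varepsilon\approx 0.0937$; even there, the possibility that an orbit revisits $P_0$ infinitely often is not excluded in general (Corollary~\ref{Cor_F} and Remark~\ref{chaos}). So the relevant comparison is not with a proof of the conjecture but with the paper's partial result, and against that benchmark your proposal is a research programme rather than a proof. The decisive step --- a Lyapunov-like quantity monotone along the orbit in the half-space $\{u>0\}$ --- is exactly the thing you acknowledge you do not have (``the main avenues I would try are\dots''). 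The paper's evidence suggests this is not a routine gap: no single monotone quantity is exhibited there either; instead the authors partition the half-plane into seven regions, prove contraction by explicit factors ($1/2$ on $P_1\cup P_2\cup P_3$, nonexpansiveness on $P_4$, controlled expansion by $\gamma=\frac{5}{2}-\sqrt{2}+\frac{1}{2}\sqrt{29-20\sqrt{2}}$ on $P_5\cup P_6$) via quartic/octic discriminant computations, and then show the product of factors over one circuit is $\gamma^3/4<1$. Your third ``avenue'' (piecewise monotone quantities with controlled passages) is in fact the paper's approach, but the work is entirely in the estimates, and even then it fails near the singular manifold, which is why the paper's region of convergence excludes $x_0<\varepsilon$.

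Two further steps in your sketch would fail as written. First, the a priori bounds $0<c\leq u_n\leq\rho_n\leq C$ and $\rho_n>\tfrac12$ are asserted (``should yield'', ``has to be extracted more delicately'') but not established, and the lower bound is precisely where the difficulty lives: for initial points close to the singular manifold the orbit can be driven into the region $y\leq 0<x$ with $x$ small (the paper's $P_0$ with $x<\varepsilon$), where the authors themselves describe the behaviour as ``somehow chaotic'' and can only show escape from $P_0$ without any quantitative control (Proposition~\ref{P0}). Second, the concluding step ``cluster points are fixed points of $T_{S,L}$ by continuity'' is invalid: continuity of $T$ at a cluster point $z$ of $(x_n)$ gives $x_{n_k+1}\to T(z)$, not $T(z)=z$; to conclude that cluster points are fixed you need asymptotic regularity, $\|x_{n+1}-x_n\|\to 0$, which in the convex case comes from firm nonexpansiveness and here would have to come from the very Lyapunov estimate you have not constructed. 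So the proposal correctly identifies the obstacle but does not overcome it, and the conjecture remains open --- indeed the paper proves strictly less than what you set out to prove.
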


Our main objective is make progress on Conjecture~\ref{conj:global}. We shall follow an algebraic approach, and in order to avoid an even more involved analysis \begin{quote}\emph{we restrict our current study
to the  case where $N=2$ and $\alpha=1/\sqrt{2}$.}\end{quote}

This case is enough to expose all of the difficulties in establishing  Conjecture~\ref{conj:global}.
 Similar proofs can be obtained for all other cases, although we believe that a different non-algebraic approach is needed to provide simpler proofs for the general case.

\section{Convergence}

In order to ease the notation, we shall denote the coordinates of the current iteration with respect to the orthonormal basis $\{a,b\}$ by $(x_n,y_n)$. Then the iteration~\eqref{DR} for $N=2$ becomes
\begin{equation}\label{DR_2}\left\{\begin{array}{l}
\displaystyle x_{n+1}=\frac{x_n}{\rho_n}=\cos\theta_n=\rho_{n+1}\cos\theta_{n+1},\\
y_{n+1}=\alpha+\left(1-\frac{1}{\rho_n}\right)y_n
=\alpha+(\rho_n-1)\sin\theta_n=\rho_{n+1}\sin\theta_{n+1},
\end{array}\right.
\end{equation}
where $\rho_n=\sqrt{x_n^2+y_n^2}$ and $\theta_n$ is the argument of $(x_n,y_n)$. Throughout this section we  assume, as we indicated, that $\alpha=1/\sqrt{2}$, in which case the sphere $S$ and the line $L$ intersect at the points $(\alpha,\alpha)$ and $(-\alpha,\alpha)$. (Geometrically, this appears totally general. Algebraically, we have been unable to show this.)

On one hand, observe that whenever the iterations lie outside the sphere, the algorithm behaves as if the sphere was a ball, and therefore the mapping $T_{S,L}$ is nonexpansive.
On the other hand, when the iterations lie inside the ball, the mapping $T_{S,L}$ can be expansive. Despite this, we will show that the \emph{contractive behavior} of the sequence
in some regions overcomes the \emph{expansive behavior} occurring in other areas, and the sequence generated does converge. It is this oscillatory behaviour which demands better understanding.

 Because of  symmetry we will need analyze the case
when $x_0>0$, where we will show that the sequence converges to the point $(\alpha,\alpha)$. We thus study the behavior of the iterations in seven different regions, see
Figure~\ref{fig:regions}.

\begin{figure}[htb]
\begin{minipage}{.6\textwidth}
$$P_0:=\{(x,y)\in \R^2\mid y\leq 0<x\}$$
$$P_1:=\{(x,y)\in \R^2\mid x^2+y^2\leq 1\text{ and }0< y\leq x\}$$
$$P_2:=\{(x,y)\in \R^2\mid x^2+y^2> 1\text{ and }0< y\leq \alpha\}$$
$$P_3:=\{(x,y)\in \R^2\mid \alpha< y\leq x\}$$
$$P_4:=\{(x,y)\in \R^2\mid x^2+y^2> 1\text{ and }0< x< y\}$$
$$P_5:=\{(x,y)\in \R^2\mid x^2+y^2\leq 1,x> 0\text{ and }y>\alpha\}$$
$$P_6:=\{(x,y)\in \R^2\mid 0< x< y\leq \alpha\}$$\medskip
\end{minipage}
\begin{minipage}{.35\textwidth}
\begin{center}
\includegraphics[width=4.2cm]{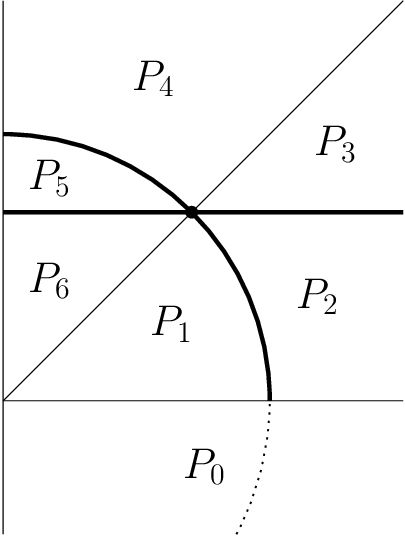}
\end{center}
\end{minipage}\caption{Different regions in the half-space $x>0$.}
\label{fig:regions}
\end{figure}

Our main result reads as follows.

\begin{theorem}\label{th:main}
  If $(x_0,y_0)\in\,\left[\varepsilon,1\right]\times[0,\sqrt{1-\epsilon^2}]$, with $\varepsilon:=\left(1-2^{-1/3}\right)^{3/2}\approx 0.0937$, then the sequence generated by the Douglas-Rachford scheme~\eqref{DR_2} with starting point
  $(x_0,y_0)$ is convergent to the point $(\alpha,\alpha)$.
\end{theorem}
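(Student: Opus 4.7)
The overall plan is to reduce the global convergence claim to the local convergence theorem of Borwein--Sims (Theorem~\ref{th:local}), by showing that every initial point in $[\varepsilon,1]\times[0,1]$ generates an orbit that eventually enters a small neighborhood of $(\alpha,\alpha)$ on which Theorem~\ref{th:local} takes over. Since the explicit iteration~\eqref{DR_2} is piecewise-smooth with its qualitative behavior changing sharply at the unit circle, I would organize the argument around the seven regions $P_0,\ldots,P_6$ and, for each region, prove a \emph{transit lemma} recording into which region (or union of regions) the image under $T_{S,L}$ lies. The aim is to build a directed graph on the regions that has no cycle avoiding the basin of attraction from Theorem~\ref{th:local}.

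Before the case analysis I would establish three structural facts. First, $x_{n+1}=x_n/\rho_n$ implies forward invariance of the half-plane $\{x>0\}$, so the orbit stays in the union of the seven regions. Second, outside the unit ball the projection onto $S$ coincides with projection onto the closed unit disk, hence $T_{S,L}$ is nonexpansive there; taking the second argument to be the fixed point $(\alpha,\alpha)$, which sits on the boundary of the disk, this shows that the distance $\|(x_n,y_n)-(\alpha,\alpha)\|$ cannot increase on any step executed from $P_2\cup P_3\cup P_4$. Third, the update $y_{n+1}-\alpha=(1-1/\rho_n)y_n$ is affine in $y_n$ once $\rho_n$ is known and, combined with $x_{n+1}=x_n/\rho_n$, yields compact formulas that can be iterated symbolically.

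The heart of the proof is the region-by-region analysis. In the ``friendly'' regions $P_3,P_4,P_5$, all lying above or on the level $y=\alpha$, I would exhibit an explicit Lyapunov quantity (for example, a weighted combination of $|x_n-\alpha|$ and $|y_n-\alpha|$) that is strictly decreased by $T_{S,L}$, so that orbits reaching these regions are driven into the basin of Theorem~\ref{th:local}. The dangerous regions are $P_0,P_1,P_6$, where the iterate lies below $L$ or inside the ball with small positive $x$: here $T_{S,L}$ can be expansive, and the worst direction of expansion is toward the singular manifold $\{x=0\}$. For these I would prove transit lemmas such as \emph{one step from $P_0$ lands in $P_1\cup P_2$} and \emph{$P_6$ maps into $P_1\cup P_5$}, arranged so that no orbit can cycle forever without touching $P_3\cup P_4\cup P_5$.

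The main obstacle, and the source of the precise constant $\varepsilon=(1-2^{-1/3})^{3/2}$, will be the analysis in $P_1$ near the singular manifold. The value $\varepsilon$ is presumably extracted from the worst-case orbit starting on the boundary $x_0=\varepsilon$: one computes the first few iterates explicitly from $x_{n+1}=x_n/\rho_n$ and $y_{n+1}=\alpha+(1-1/\rho_n)y_n$ and requires that $x_n$ never drops below the threshold from which a subsequent expansive step could push it to zero. The algebraic signature $(1-\varepsilon^{2/3})^3=1/2$ strongly hints that the binding inequality is a cubic in $x_n^{2/3}$ that appears after two or three iterations; verifying this sharp bound, and confirming that no other orbit in the rectangle is worse than the extremal one, is the delicate algebraic core of the argument.
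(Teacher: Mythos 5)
Your region-by-region transit framework is indeed the skeleton of the paper's argument, but the proposal has two substantive problems. First, you have the dynamics of the regions partly backwards: $P_5$ lies \emph{inside} the unit ball, where the map is expansive, so no Lyapunov quantity is strictly decreased there; conversely $P_1$ is a \emph{contractive} region (the paper proves $|(x_{n+1},y_{n+1})-(\alpha,\alpha)|^2\leq\frac{1}{2}|(x_n,y_n)-(\alpha,\alpha)|^2$ on all of $P_1\cup P_2\cup P_3$). The genuinely dangerous regions are $P_5$ and $P_6$, and the threshold $\varepsilon=(1-2^{-1/3})^{3/2}$ enters in $P_6$, not $P_1$: in $P_6$ one has $x_{n+1}=x_n/\rho_n>x_n$, so $x$ is not in danger of collapsing to $0$; the risk is that $y_{n+1}=\alpha+(1-1/\rho_n)y_n$ becomes negative (the orbit falls into $P_0$, where behavior is uncontrolled), and $\varepsilon$ is exactly the value making $\alpha+\varepsilon\tan\theta-\sin\theta\geq 0$ on $(\pi/4,\pi/2)$, with minimum $0$ at $\theta=\arcsin(2^{-1/6})$. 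Your transit claims are also off: $P_6$ maps into $P_6\cup P_1$ (not $P_1\cup P_5$), and a single step from $P_0$ need not leave $P_0$.

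Second, and more fundamentally, your plan to finish by invoking the Borwein--Sims local theorem cannot close the argument. That theorem's basin is not explicit, so to use it you must show the orbit enters an \emph{unspecified} neighborhood of $(\alpha,\alpha)$, i.e.\ that $(\alpha,\alpha)$ is an accumulation point of the orbit --- and establishing that is the entire difficulty, since the transit graph alone permits the orbit to cycle $P_1\to P_2\to P_3\to P_4\to P_5\to P_6\to P_1$ forever with the expansion in $P_5\cup P_6$ undoing the contraction elsewhere. The missing idea is a \emph{quantitative} bound on the expansion inside the ball above the line: the paper proves that each step in $P_5\cup P_6$ multiplies the squared distance to $(\alpha,\alpha)$ by at most $\gamma=\frac{5}{2}-\sqrt{2}+\frac{1}{2}\sqrt{29-20\sqrt{2}}\approx 1.51$, and then shows each full loop returning to $P_1$ contracts the squared distance by at least $\gamma^3/4\leq 0.86<1$. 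This direct contraction-over-loops estimate is what yields convergence without any appeal to the (ineffective) local result; without it, or some substitute, your outline does not prove the theorem.
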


In order to prove Theorem~\ref{th:main}, we will show that the sequence is convergent to $(\alpha,\alpha)$ whenever the initial point $(x_0,y_0)\in P_1$. The next step will be to
prove that the sequence hits the region $P_1$ after a finite number of iterations, when $(x_0,y_0)$ belongs to the other demarcated  areas. This will imply the convergence. In order to
demonstrate  convergence within the region $P_1$, we will analyze the behavior of the iterations within each of the other regions. We will show that the iterations pass through
the different regions in a counterclockwise way. We begin with the region $P_0$: the next proposition shows that the sequence must eventually abandon the region $P_0$.

\begin{proposition}\label{P0}
  If $(x_n,y_n)\in P_0$ then $y_{n+k}>0$ for some $k\in\N$.
\end{proposition}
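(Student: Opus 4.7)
The plan is to observe that, so long as the iterates remain in $P_0$, the second coordinate strictly increases by at least $\alpha = 1/\sqrt{2}$ per step, which prevents indefinite residence in $P_0$. The key is simply to rewrite the $y$-recursion so that this constant drift becomes visible.

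First I would verify that the iteration~\eqref{DR_2} stays well defined along the trajectory: if $x_n > 0$ then $\rho_n = \sqrt{x_n^2+y_n^2} > 0$ and $x_{n+1} = x_n/\rho_n > 0$, so positivity of the first coordinate is preserved. Consequently, the only way for an iterate to leave $P_0$ is to acquire a positive second coordinate, which is exactly what we want; the remaining scenario to rule out is that $(x_{n+j},y_{n+j}) \in P_0$ for every $j \geq 0$.

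Next I would rewrite the $y$-recursion from~\eqref{DR_2} in the form
\[ y_{n+1} - y_n \;=\; \alpha - \frac{y_n}{\rho_n}. \]
When $y_n \leq 0$ and $\rho_n > 0$, the second term on the right is nonnegative, so $y_{n+1} - y_n \geq \alpha > 0$. Applying this inductively under the assumption that the orbit stays in $P_0$ gives $y_{n+j} \geq y_n + j\alpha$, which must eventually be positive, contradicting $y_{n+j} \leq 0$. In fact $k = \lceil -y_n/\alpha \rceil + 1$ suffices.

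There is essentially no obstacle in this step: the dynamics in $P_0$ exhibits a uniform $y$-drift of at least $\alpha$, so the region is abandoned in a controlled, finite number of iterations. The genuine difficulties lie in the later propositions, where the iteration can be expansive and the orbit must be tracked through the remaining regions.
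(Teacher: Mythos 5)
Your proof is correct, and it takes a genuinely different and simpler route than the paper's. The paper argues by contradiction: assuming $y_{n+k}<0$ for all $k$ (the case $y_{n+m}=0$ being dispatched separately), it deduces $\rho_{n+k}>1$, hence that $x_{n+k}$ is decreasing and convergent, infers $\rho_{n+k}\to 1$, and concludes $y_{n+k}\to\alpha$, contradicting negativity. Your argument instead extracts the identity $y_{n+1}-y_n=\alpha-y_n/\rho_n$ directly from~\eqref{DR_2} and observes that the drift term $-y_n/\rho_n$ is nonnegative whenever $y_n\le 0$ (and $\rho_n>0$, which you correctly justify via preservation of $x_n>0$), giving a uniform increase of at least $\alpha$ per step while the orbit sits in $P_0$. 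This buys two things the paper's proof does not provide: an \emph{effective} escape time $k\le\lceil -y_n/\alpha\rceil+1$, and immunity to a delicate point in the paper's argument (the inference that $\rho_{n+k}\to 1$ from the convergence of $x_{n+k}$ tacitly uses that the limit of $x_{n+k}$ is positive). Your version also handles $y_n=0$ and $y_n<0$ uniformly, whereas the paper must treat $y_{n+m}=0$ as a separate case. The only cost is that your proof yields no information about where the orbit lands upon exiting $P_0$, but neither does the paper's, and the proposition does not ask for it.
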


\begin{proof}
  If $y_{n+m}=0$ for some $m=0,1,2,\ldots$, then $y_{n+m+1}=\alpha$, and we can take $k=m+1$. Suppose by contradiction that $y_{n+k}<0$ for all $k=0,1,2,\dots$. From
  $$0>y_{n+k+1}=\alpha+\left(1-\frac{1}{\rho_{n+k}}\right)y_{n+k},$$
  we deduce that $\rho_{n+k}>1$ for all $k=0,1,\ldots$. Thus
  $$x_{n+k+1}=\frac{x_{n+k}}{\rho_{n+k}}<x_{n+k},$$
  for all $k=0,1,\ldots$, and therefore the sequence $x_{n+k}$ is convergent to some $x^*$; whence, $\rho_{n+k}$ is convergent to $1$. Since
  $$y_{n+k+1}=\alpha+(\rho_{n+k}-1)\sin\theta_{n+k},$$
  one has that the sequence $y_{n+k}$ is convergent to $\alpha$, a contradiction with the assumption $y_{n+k}<0$ for all $k=0,1,2,\dots$.
\end{proof}

\begin{remark}\label{rem:y=0}
    If $y_0=0$ then $\rho_0=x_0$, and thus, $(x_1,y_1)=(1,\alpha)\in P_2$. It is possible to compute symbolically the sequence and check that $(x_6,y_6)\in P_1$.
\end{remark}

In the region $P_1\cup P_2\cup P_3$ the sequence gets closer after each iteration to the intersection point $(\alpha,\alpha)$, and after one or at most two iterations it jumps to
the next region.

\begin{lemma}\label{lemma1}
If $(x_n,y_n)\in P_1\cup P_2\cup P_3$ then
  \begin{equation}\label{bound_1}
     |(x_{n+1},y_{n+1})-(\alpha,\alpha)|^2\leq \frac{1}{2}|(x_n,y_n)-(\alpha,\alpha)|^2.
  \end{equation}
  Moreover, we have the following:
  \begin{enum}
    \item if $(x_n,y_n)\in P_1$ then $(x_{n+2},y_{n+2})\in P_2\cup P_3\cup P_4;$
    \item if $(x_n,y_n)\in P_2$ then $(x_{n+1},y_{n+1})\in P_3\cup P_4$;
    \item if $(x_n,y_n)\in P_3$ then $(x_{n+1},y_{n+1})\in P_4$.
  \end{enum}
\end{lemma}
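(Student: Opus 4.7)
\medskip

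\noindent\emph{Proof plan.} Throughout I work in the polar coordinates $(x_n,y_n)=(\rho_n\cos\theta_n,\rho_n\sin\theta_n)$ of~\eqref{DR_2}, noting that the hypothesis $(x_n,y_n)\in P_1\cup P_2\cup P_3$ forces $0<y_n\le x_n$, i.e.\ $\theta_n\in(0,\pi/4]$.

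For the contraction~\eqref{bound_1}, a direct computation from~\eqref{DR_2} gives $|(x_{n+1},y_{n+1})-(\alpha,\alpha)|^2=(\cos\theta_n-\alpha)^2+(\rho_n-1)^2\sin^2\theta_n$, while $2\alpha^2=1$ yields $|(x_n,y_n)-(\alpha,\alpha)|^2=\rho_n^2-2\alpha\rho_n(\cos\theta_n+\sin\theta_n)+1$. Writing $\sigma:=\cos\theta_n+\sin\theta_n$, $\delta:=\cos\theta_n-\sin\theta_n$ and $\tau:=\rho_n-1$, and using the identity $1-\alpha\sigma=\alpha(2\alpha-\sigma)$ (again from $2\alpha^2=1$), I would rewrite the difference as a quadratic in $\tau$:
\[
 |(x_n,y_n)-(\alpha,\alpha)|^2 - 2\,|(x_{n+1},y_{n+1})-(\alpha,\alpha)|^2 = \delta\sigma\tau^2+2\alpha(2\alpha-\sigma)\tau+\delta(2\alpha-\sigma).
\]
In the relevant region $\delta\ge 0$ and $\sigma\in(1,\sqrt{2}]$ (so $2\alpha-\sigma\ge 0$), giving nonnegative coefficients. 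Using $\delta^2+\sigma^2=2$, the discriminant $4(2\alpha-\sigma)[\alpha^2(2\alpha-\sigma)-\delta^2\sigma]$ is $\le 0$ exactly when $\sqrt{2}\le 5\sigma-2\sigma^3$ on $[1,\sqrt{2}]$, which is clear since $5\sigma-2\sigma^3$ is strictly decreasing on $[1,\sqrt{2}]$ with value $\sqrt{2}$ at $\sigma=\sqrt{2}$. Hence the quadratic is nonnegative, with equality only on the diagonal $y=x$.

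For~(iii), the identity $y_{n+1}-x_{n+1}=\alpha+y_n-\sigma$ combined with $y_n>\alpha$ and $\sigma\le 2\alpha$ gives $y_{n+1}>x_{n+1}$; together with $x_{n+1}=\cos\theta_n\ge\alpha$ this yields $x_{n+1}^2+y_{n+1}^2>2\alpha^2=1$, placing $(x_{n+1},y_{n+1})\in P_4$. For~(ii), the conditions $\rho_n>1$ and $y_n\le\alpha$ force $\sin\theta_n<\alpha$, so $x_{n+1}=\cos\theta_n>\alpha$, while $y_{n+1}>\alpha$ since $\rho_n>1$ and $\sin\theta_n>0$; splitting on the ordering of $x_{n+1}$ and $y_{n+1}$ places the next iterate in $P_3$ or (via $x_{n+1},y_{n+1}>\alpha$) in $P_4$.

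For~(i), $\rho_n\le 1$ directly gives $\alpha\le x_{n+1}$ and $0<y_{n+1}\le\alpha$, so $(x_{n+1},y_{n+1})\in P_1\cup P_2$; if it lies in $P_2$, claim~(ii) concludes. The remaining subcase, where two consecutive iterates lie in $P_1$, is the main technical obstacle. Here I would use the factorization
\[
 \rho_{n+1}^2-1=\bigl(\alpha-(2-\rho_n)\sin\theta_n\bigr)(\alpha+y_n),
\]
obtained by completing the square in $x_{n+1}^2+y_{n+1}^2-1$, and its analogue at step $n+1$ to reduce $\rho_{n+2}^2>1$ to the polynomial inequality $(2-\rho_{n+1})(1-\rho_n)\sin\theta_n>2\alpha(1-\rho_{n+1})$, which should be verifiable from the explicit formula $y_{n+1}=\alpha-(1-\rho_n)\sin\theta_n$ together with the two-step $P_1$ constraint $\rho_n,\rho_{n+1}\in(0,1]$ (the second inequality pinning down $\sin\theta_n\ge\alpha/(2-\rho_n)$). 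This last verification is the principal algebraic difficulty.
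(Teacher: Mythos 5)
Your proof of the contraction \eqref{bound_1} and of claims (ii) and (iii) is correct, and it takes a genuinely different (and cleaner) route than the paper. For \eqref{bound_1} the paper also reduces to a quadratic in $\rho_n$, but then analyses its discriminant by substituting $w=\sin\theta_n$ and locating the roots of a degree-eight polynomial; your parametrisation by $\sigma=\cos\theta_n+\sin\theta_n$, $\delta=\cos\theta_n-\sin\theta_n$, $\tau=\rho_n-1$ is an exact rewriting of the same difference (I checked the identity), and it collapses the discriminant condition to $\sqrt{2}\le 5\sigma-2\sigma^{3}$ on $(1,\sqrt{2}]$, which is immediate. It also has the virtue of treating all $\tau\in\R$ at once, so the regions $P_2$ and $P_3$ (where $\rho_n>1$) are covered explicitly, whereas the paper only states its conclusion for $\rho_n\in[0,1]$. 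Similarly, deducing (iii) from $y_{n+1}-x_{n+1}=\alpha+y_n-\sigma$ together with $x_{n+1}\ge\alpha$, and (ii) from $\sin\theta_n<\alpha$ and $y_{n+1}>\alpha$, bypasses the identity behind~\eqref{eq5} entirely. These parts could be spliced in as written.

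Part (i), however, is not proved. In its only nontrivial subcase --- two consecutive iterates in $P_1$ --- you correctly reduce the needed statement $\rho_{n+2}>1$ to the inequality $(2-\rho_{n+1})(1-\rho_n)\sin\theta_n>2\alpha(1-\rho_{n+1})$ via the factorisation $\rho_{n+1}^2-1=\bigl(\alpha-(2-\rho_n)\sin\theta_n\bigr)(\alpha+y_n)$, but you then stop, saying the verification ``should be'' possible and is ``the principal algebraic difficulty.'' That verification \emph{is} the mathematical content of (i); deferring it leaves the lemma unestablished. The paper closes exactly this step by using $x_{n+1}\ge\alpha$ to bound $\rho_{n+1}\ge\sqrt{1/2+y_{n+1}^2}$, reducing the claim to a one-variable inequality in $y_{n+1}$ that factors as $(w-\sqrt{2})^{2}(w-\sqrt{2}-\sqrt{6})(w-\sqrt{2}+\sqrt{6})<0$ for $w=\sqrt{2y_{n+1}^{2}+1}\in[1,\sqrt{2})$. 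Your two-variable formulation can be pushed through to essentially the same quartic (substituting $y_{n+1}=\alpha-(1-\rho_n)\sin\theta_n$ and using $\sin\theta_n\le\alpha$ leads to $\bigl(\tfrac12+y_{n+1}^2\bigr)(\alpha+y_{n+1})^{2}>4y_{n+1}^{2}$, whose left-minus-right side is $(y_{n+1}-\alpha)^{2}(y_{n+1}^{2}+2\sqrt{2}\,y_{n+1}+\tfrac12)$), but none of this is in your write-up, and the strictness is delicate: equality occurs precisely at the fixed point $(\alpha,\alpha)\in P_1$, where claim (i) in fact degenerates for the paper's argument as well. Until this inequality is actually verified, part (i) has a genuine gap.
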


\begin{proof}
To prove~\eqref{bound_1} notice that
\begin{align*}
2\left[(x_{n+1}-\alpha)^2+(y_{n+1}-\alpha)^2\right]&=2(\cos\theta_n-\alpha)^2+2(y_n-\sin\theta_n)^2\\
&=2(\cos\theta_n-\alpha)^2+2(\rho_n-1)^2\sin^2\theta_n\\
&=2\cos^2\theta_n-4\alpha\cos\theta_n+2\alpha^2+2\sin^2\theta_n\\
&\quad+2\rho_n(\rho_n-2)\sin^2\theta_n\\
&=3-4(\alpha\cos\theta_n+\rho_n\sin^2\theta_n)+2\rho_n^2\sin^2\theta_n,
\end{align*}
and also,
\begin{align*}
(x_n-\alpha)^2+(y_n-\alpha)^2&=(\rho_n\cos\theta_n-\alpha)^2+(\rho_n\sin\theta_n-\alpha)^2\\
&=\rho_n\cos^2\theta_n-2\alpha\rho_n\cos\theta_n+\alpha^2+\rho_n^2\sin^2\theta_n\\
&\quad-2\alpha\rho_n\sin\theta_n+\alpha^2\\
&=1+\rho_n^2-2\alpha\rho_n(\cos\theta_n+\sin\theta_n).
\end{align*}
Thus~\eqref{bound_1} holds if and only if
\begin{equation*}
2-4(\alpha\cos\theta_n+\rho_n\sin^2\theta_n)+2\rho_n^2\sin^2\theta_n-\rho_n^2+2\alpha\rho_n(\cos\theta_n+\sin\theta_n)\leq 0,
\end{equation*}
or, equivalently,
\begin{align}
f(\rho_n,\theta_n):=&{\left(2\sin^2\theta_n -1\right)} \rho_n^{2}-{\left(4\sin^2\theta_n -
{\left(\sin\theta_n+\cos\theta_n\right)}\sqrt{2}\right)}\rho_n\nonumber\\
&-2\sqrt{2}\cos\theta_n+2
\leq 0.\label{eq3}
\end{align}
If $\theta_n=\pi/4$, then~\eqref{eq3} holds with equality.

 We will now prove that $f(\rho_n,\theta_n)<0$  for all $\rho_n\in[0,1]$ and
$\theta_n\in[0,\pi/4)$. Observe that $f(0,\theta_n)=2-2\sqrt{2}\cos\theta_n<0$ for all $\theta_n\in[0,\pi/4)$. On the other hand, notice that
$f(\rho_n,\theta_n)$ is a quadratic function in terms of $\rho_n$, whose leading coefficient $2\sin^2\theta_n -1$ is nonzero. Its discriminant is
equal to
\begin{equation}\label{eq4}
16\sin^{4}\theta_n - 8\sqrt{2}\sin^{3}\theta_n+8{\left(\sqrt{2}\cos\theta_n - 2\right)}\sin^{2}\theta_n + 4 \sin\theta_n \cos\theta_n -
8\sqrt{2}\cos\theta_n + 10.
\end{equation}
If we make the substitution $w:=\sin\theta_n$, then~\eqref{eq4} becomes
$$16w^{4} - 8 \sqrt{2} w^{3} + 8 {\left(\sqrt{1-w^{2}} \sqrt{2} - 2\right)} w^{2} + 4 \sqrt{1-w^{2}} w - 8  \sqrt{1-w^{2}} \sqrt{2} + 10.$$
Hence, the discriminant is equal to zero if and only if
$$16 w^{4} - 8  \sqrt{2} w^{3} - 16  w^{2} + 10 = -4  \sqrt{1-w^{2}} {\left(2  \sqrt{2} w^{2} + w - 2  \sqrt{2}\right)}.$$
Taking squares of both sides and dividing by $4$, we get
$$64 w^{8} - 64 \sqrt{2} w^{7} - 64 w^{6} + 80 \sqrt{2} w^{5} + 52 w^{4} - 72 \sqrt{2} w^{3} + 12  w^{2} + 16 \sqrt{2} w - 7=0.$$
The roots of the latter equation are
\begin{gather*}
-\frac{1}{4}  {\left(\sqrt{-2  \sqrt{2} + 1} + \sqrt{2} + 1\right)} \sqrt{2},\; \frac{1}{4} {\left(\sqrt{-2 \sqrt{2} + 1} - \sqrt{2} - 1\right)}
\sqrt{2}, \\
-\frac{1}{4}  {\left(\sqrt{2  \sqrt{2} + 1} - \sqrt{2} + 1\right)} \sqrt{2}, \; \frac{1}{4} {\left(\sqrt{2 \sqrt{2} + 1} + \sqrt{2} - 1\right)}
\sqrt{2}, \;\frac{1}{\sqrt{2}}  .
\end{gather*}
The first two roots are complex numbers, the third one is negative, and the fourth one is greater than $1/\sqrt{2}$. Remembering that
$w=\sin\theta_n$, with $\theta_n\in[0,\pi/4)$, we have that $w\in[0,1/\sqrt{2})$. When $\theta_n=0$, the discriminant~\eqref{eq4} is equal to
$-8\sqrt{2} + 10<0$; whence,~\eqref{eq4} is always negative for all $\theta_n\in[0,\pi/4)$. Therefore, $f(\rho_n,\theta_n)\leq 0$ for all
$\rho_n\in[0,1]$ and $\theta_n\in[0,\pi/4]$, which completes the first part of the proof.
We turn to  the second part.

\emph{To prove~(i)}, notice first that $\theta_n\in(0,\pi/4]$, since $x_n\in P_1$. 
Then $x_{n+1}=\cos \theta_n\in[\alpha,1)$ and
$$y_{n+1}=\alpha+(\rho_n-1)\sin \theta_n\in[0,\alpha).$$
Furthermore,
\begin{align*}
x_{n+2}^2+y_{n+2}^2&=\frac{x_{n+1}^2}{\rho_{n+1}^2}+\alpha^2+\left(1-\frac{1}{\rho_{n+1}}\right)^2y_{n+1}^2+2\alpha\left(1-\frac{1}{\rho_{n+1}}\right)y_{n+1}\\
&=1+\frac{1}{2}+\left(1-\frac{2}{\rho_{n+1}}\right)y_{n+1}^2+2\alpha\left(1-\frac{1}{\rho_{n+1}}\right)y_{n+1}\\
&=1+\left(y_{n+1}+\frac{1}{\sqrt{2}}\right)\left(y_{n+1}\left(\frac{\rho_{n+1}-2}{\rho_{n+1}}\right)+\frac{1}{\sqrt{2}}\right).
\end{align*}
Thus, $x_{n+2}^2+y_{n+2}^2>1$ if and only if
\begin{equation}\label{eq1}
y_{n+1}\left(\frac{2-\rho_{n+1}}{\rho_{n+1}}\right)<\frac{1}{\sqrt{2}}.
\end{equation}
On the other hand, $\rho_{n+1}\geq\sqrt{1/2+y_{n+1}^2}$, which implies that
$$y_{n+1}\left(\frac{2-\rho_{n+1}}{\rho_{n+1}}\right)\leq y_{n+1}\left(\frac{2-\sqrt{1/2+y_{n+1}^2}}{\sqrt{1/2+y_{n+1}^2}}\right).$$
Hence~\eqref{eq1} holds if
\begin{equation}\label{eq2}
y_{n+1}\left(\frac{2-\sqrt{1/2+y_{n+1}^2}}{\sqrt{1/2+y_{n+1}^2}}\right)<\frac{1}{\sqrt{2}},
\end{equation}
or, equivalently,
$$-{\sqrt{2y_{n+1}^{2} + 1} \sqrt{2} y_{n+1} + 4  y_{n+1}}<{\sqrt{2  y_{n+1}^{2} + 1}}.$$
Taking $w:=\sqrt{2y_{n+1}^2+1}\in[1,\sqrt{2})$, the above inequality becomes
$$\left(- w + 2\sqrt{2}\right)\sqrt{w^{2} - 1}  < w.$$
Squaring both sides we obtain
$$w^{4} - 4 \sqrt{2} w^{3} + 7  w^{2} + 4  \sqrt{2} w - 8 < w^{2},$$
that is,
$$w^{4} - 4 \sqrt{2} w^{3} + 6  w^{2} + 4  \sqrt{2} w - 8 < 0.$$
Finally, notice that
$$w^{4} - 4 \sqrt{2} w^{3} + 6  w^{2} + 4  \sqrt{2} w - 8 ={\left(w - \sqrt{2}\right)}^{2} {\left(w - \sqrt{2} - \sqrt{6}\right)} {\left(w - \sqrt{2} + \sqrt{6}\right)}<0,$$
since $w\in[1,\sqrt{2})$. Therefore~\eqref{eq1} holds, whence $(x_{n+1},y_{n+1})\in P_2\cup P_3\cup P_4$, and the proof of~(i) is complete.

\emph{To prove (ii)}, observe that
$$y_{n+1}=\alpha+\left(1-\frac{1}{\rho_n}\right)y_n>\alpha,$$
since $\rho_n>1$ and $y_n>0$. Moreover, by a similar argumentation as above, we have that
\begin{align*}
x_{n+1}^2+y_{n+1}^2&=
1+\left(y_n+\frac{1}{\sqrt{2}}\right)\left(y_n\left(\frac{\rho_n-2}{\rho_n}\right)+\frac{1}{\sqrt{2}}\right).
\end{align*}
Hence, $x_{n+1}^2+y_{n+1}^2>1$ if and only if
\begin{equation}\label{eq5}
\frac{1}{\sqrt{2}}>y_n\left(\frac{2-\rho_n}{\rho_n}\right)=(2-\rho_n)\sin\theta_n,
\end{equation}
which holds since $\sin\theta_n\in(0,\alpha)$ and $\rho_n>1$. Therefore $(x_{n+1},y_{n+1})\in P_3\cup P_4$, as claimed.

\emph{To prove (iii)}, if $(x_{n},y_n)\in P_3$, we have again $x_{n+1}^2+y_{n+1}^2>1$ because of~\eqref{eq5}, since $\sin\theta_n\in(0,\alpha]$ and $\rho_n>1$.
Furthermore,
$$x_{n+1}-y_{n+1}=\cos\theta_n-\alpha+(1-\rho_n)\sin\theta_n.$$
Since $\rho_n>\frac{\alpha}{\sin\theta_n}$ and $\theta_n\in(0,\pi/4]$, we have
$$x_{n+1}-y_{n+1}=(\cos\theta_n+\sin\theta_n)-\alpha-\rho_n\sin\theta_n< \sqrt{2}-2\alpha=0.$$
Thus, $(x_{n+1},y_{n+1})\in P_4$, and the proof is complete.
\end{proof}

In the region $P_4$ the sequence does not get any farther from the intersection point $(\alpha,\alpha)$, and one can show that it jumps to the next region $P_5$ after a finite
number of steps.

\begin{lemma}\label{P4} If $(x_n,y_n)\in P_4$ then
  \begin{equation}\label{bound_3}
     |(x_{n+1},y_{n+1})-(\alpha,\alpha)|^2 \leq |(x_n,y_n)-(\alpha,\alpha)|^2.
  \end{equation}
Moreover, $(x_{n+1},y_{n+1})\in P_4\cup P_5$ and there is some $k\in\mathbb{N}$ such that $(x_{n+k},y_{n+k})\in P_5$.
\end{lemma}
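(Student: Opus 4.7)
The plan is to prove the three assertions of the lemma in sequence: the nonexpansive bound~\eqref{bound_3}, the one-step inclusion $(x_{n+1},y_{n+1})\in P_4\cup P_5$, and the finite-step exit into $P_5$. The first will be handled by extending the algebra developed in the proof of Lemma~\ref{lemma1}; the second is an immediate coordinate check; the third will be argued by contradiction, showing that a trajectory stuck in $P_4$ would eventually force either the $x$- or $y$-coordinate to violate the defining inequalities of $P_4$.

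For~\eqref{bound_3}, I will re-use the identities
$$2|(x_{n+1},y_{n+1})-(\alpha,\alpha)|^2 = 3 - 4(\alpha\cos\theta_n+\rho_n\sin^2\theta_n)+2\rho_n^2\sin^2\theta_n$$
and
$$|(x_n,y_n)-(\alpha,\alpha)|^2 = 1+\rho_n^2 - 2\alpha\rho_n(\cos\theta_n+\sin\theta_n)$$
already computed in the proof of Lemma~\ref{lemma1}. Subtracting and using $\alpha=1/\sqrt{2}$ reduces the desired inequality to $g(\rho_n,\theta_n)\geq 0$, where
$$g(\rho,\theta) := 2\rho^2\cos^2\theta + 4\rho\sin^2\theta - 2\sqrt{2}\rho(\cos\theta+\sin\theta) + 2\sqrt{2}\cos\theta - 1.$$
In $P_4$, $\theta\in(\pi/4,\pi/2)$ yields $2\cos^2\theta>0$, so $g$ is a genuine upward parabola in $\rho$. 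Its vertex sits at
$$\rho^*=\frac{\sqrt{2}(\cos\theta+\sin\theta)-2\sin^2\theta}{2\cos^2\theta},$$
and $\rho^*\leq 1$ follows immediately from $\cos\theta+\sin\theta = \sqrt{2}\sin(\theta+\pi/4)\leq \sqrt{2}$. Hence $g(\rho,\theta)\geq g(1,\theta)$ for $\rho\geq 1$, and a routine simplification produces the clean identity $g(1,\theta) = 2(\sin\theta-\alpha)^2\geq 0$.

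The inclusion step is immediate: $\theta_n\in(\pi/4,\pi/2)$ gives $x_{n+1}=\cos\theta_n\in(0,\alpha)$, while $\rho_n>1$ and $\sin\theta_n>0$ give $y_{n+1}=\alpha+(\rho_n-1)\sin\theta_n>\alpha$, so $0<x_{n+1}<\alpha<y_{n+1}$; the iterate then belongs to $P_4$ or $P_5$ according as $x_{n+1}^2+y_{n+1}^2$ exceeds $1$ or not. For the finite exit, I will argue by contradiction, assuming $(x_{n+k},y_{n+k})\in P_4$ for every $k\geq 0$; then $x_{n+k+1}=x_{n+k}/\rho_{n+k}<x_{n+k}$, so $\{x_{n+k}\}$ decreases to some $x^*\geq 0$. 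If $x^*>0$, the ratio $\rho_{n+k}=x_{n+k}/x_{n+k+1}$ tends to $1$, and the recursion $y_{n+k+1}=\alpha+(\rho_{n+k}-1)\sin\theta_{n+k}$ forces $y_{n+k}\to\alpha$, which combined with $\rho_{n+k}^2\to 1$ yields $x^*=\sqrt{1-\alpha^2}=\alpha$; but $x_{n+k}\leq x_{n+1}=\cos\theta_n<\alpha$ for $k\geq 1$, a contradiction. If $x^*=0$, then $\cos\theta_{n+k}=x_{n+k+1}\to 0$ gives $\sin\theta_{n+k}\to 1$, whence the identity $y_{n+k+1}-y_{n+k}=\alpha-\sin\theta_{n+k}\to \alpha-1<0$ drives $y_{n+k}\to -\infty$, contradicting $y_{n+k}>0$ in $P_4$.

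The main obstacle is the algebraic verification of $g\geq 0$: the inequality is tight at the fixed point $(\rho,\theta)=(1,\pi/4)$, so no slack is available, and one must carefully establish the vertex bound $\rho^*\leq 1$ across the full range of $\theta$ to reduce the problem to the clean one-parameter identity $g(1,\theta)=2(\sin\theta-\alpha)^2$. The remaining steps, though they involve a case split on $x^*$, are essentially routine dynamical arguments once this bound is in hand.
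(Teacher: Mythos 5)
Your proof is correct, but it replaces the paper's one-line argument for the bound~\eqref{bound_3} with a self-contained algebraic verification. The paper simply observes that on $P_4$ the iterate lies outside the unit sphere, where $P_S=P_B$ for the closed unit ball $B$, so $T_{S,L}$ coincides with the firmly nonexpansive convex operator $T_{B,L}$, which fixes $(\alpha,\alpha)$; the bound is then immediate from nonexpansiveness. Your route instead reduces~\eqref{bound_3} to $g(\rho,\theta)\geq 0$ via the identities from Lemma~\ref{lemma1}, and the reduction is sound: the leading coefficient $2\cos^2\theta$ is positive on $(\pi/4,\pi/2)$, the vertex bound $\rho^*\leq 1$ does follow from $\cos\theta+\sin\theta\leq\sqrt{2}$, and $g(1,\theta)=2(\sin\theta-\alpha)^2$ checks out, so $g(\rho,\theta)\geq g(1,\theta)\geq 0$ for $\rho\geq 1$. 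This costs more computation but is self-contained within the two-dimensional setting and pinpoints exactly where the inequality is tight. For the inclusion, your observation $0<x_{n+1}=\cos\theta_n<\alpha<y_{n+1}$ is marginally cleaner than the paper's estimate $x_{n+1}-y_{n+1}<\sqrt{2}-2\alpha=0$. For the finite exit, the paper notes that both $x_k$ and $y_k$ are strictly decreasing, deduces convergence to $(\alpha,\alpha)$, and contradicts $x_{k+1}=\cos\theta_k<\alpha$; your case split on $x^*>0$ versus $x^*=0$ reaches the same contradiction and in fact handles explicitly the degenerate possibility $x^*=0$ that the paper's ``whence $\rho_k\to 1$'' passes over (it is excluded there only implicitly, via $\sin\theta_k\to\alpha$).
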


\begin{proof}
The first part is a direct consequence of the convex case: when the point lies outside the sphere, the Douglas-Rachford iteration behaves as it the sphere was a ball; thus, we
have the nonexpansive bound~\eqref{bound_3}, see e.g.~\cite[Theorem~12.1]{GK90} or~\cite[Lemma~1]{LM79}.

For the second part, observe that
$$y_{n+1}-\alpha=(\rho_n-1)\sin\theta_n>0,$$
since $\theta_n\in(\pi/4,\pi/2)$ and $\rho_n>1$. Furthermore,
$$x_{n+1}-y_{n+1}=(\cos\theta_n+\sin\theta_n)-\alpha-\rho_n\sin\theta_n<\sqrt{2}-2/\alpha=0;$$
thus, $(x_{n+1},y_{n+1})\in P_4\cup P_5$. Finally, if $(x_k,y_k)\in P_4$ for all $k> n$, then $\rho_k>1$  and $\theta_k\in(\pi/4,\pi/2)$, and hence
$$x_{k+2}=\frac{x_{k+1}}{\rho_{k+1}}<x_{k+1}=\frac{x_{k}}{\rho_k}< x_{k}<\ldots <x_n,$$
for all $k> n$. Similarly,
$$y_{k+2}=y_{k+1}+\alpha-\sin\theta_{k+1}<y_{k+1}=y_{k}+\alpha-\sin\theta_{k}<y_{k}<\ldots<y_n.$$
Therefore, the sequence $(x_k,y_k)$ is convergent, whence $\rho_k\to 1$, and then $(x_k,y_k)\to(\alpha,\alpha)$. Nevertheless,
$$x_{k+1}=\cos\theta_k<\alpha,$$
for all $k\geq n$; thus, the decreasing sequence $x_k$ cannot converge to $\alpha$, and we obtain a contradiction.
\end{proof}

In the regions $P_5$ and $P_6$ the sequence gets farther from the point $(\alpha,\alpha)$. Nevertheless, the effect is ``rather small'' compared to the contractiveness of other
regions.

\begin{lemma}\label{P56}
If $(x_n,y_n)\in P_5\cup P_6$ then
  \begin{equation}\label{bound_2}
     |(x_{n+1},y_{n+1})-(\alpha,\alpha)|^2< \left(\frac{5}{2}- \sqrt{2} + \frac{1}{2}  \sqrt{-20  \sqrt{2} + 29}\right)|(x_n,y_n)-(\alpha,\alpha)|^2.
  \end{equation}
  Moreover, we have the following:
  \begin{enum}
    \item if $(x_n,y_n)\in P_5$ then $(x_{n+1},y_{n+1})\in P_6$;
    \item if $(x_n,y_n)\in P_6$ with $x_n\geq \varepsilon:=\left(1-2^{-1/3}\right)^{3/2}\approx 0.0937$ then there is some $k\in\N$ such that $(x_{n+k},y_{n+k})\in P_1$.
  \end{enum}
\end{lemma}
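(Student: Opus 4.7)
Part (i) is direct, the norm bound~(\ref{bound_2}) follows the polynomial-reduction pattern of Lemma~\ref{lemma1}, and (ii) is the main challenge.

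\emph{Part (i).} In $P_5$, $\rho_n\in[\alpha,1]$ and $\theta_n\in(\pi/4,\pi/2)$, so $x_{n+1}=\cos\theta_n\in(0,\alpha)$ and $y_{n+1}\in(2\alpha-1,\alpha]$; the identity $y_{n+1}-x_{n+1}=(y_n-\alpha)+(2\alpha-\sin\theta_n-\cos\theta_n)$ is positive because $y_n>\alpha$ and $\sin\theta+\cos\theta\leq\sqrt{2}=2\alpha$, so $(x_{n+1},y_{n+1})\in P_6$.

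\emph{The bound.} Reusing the identities from Lemma~\ref{lemma1}, the inequality $R:=|(x_{n+1},y_{n+1})-(\alpha,\alpha)|^2/|(x_n,y_n)-(\alpha,\alpha)|^2<c$ (with $c$ the stated constant) reduces to showing $h(\rho,\theta):=2cD-N>0$ on $\rho\in(0,1]$, $\theta\in(\pi/4,\pi/2)$, where $h$ is quadratic in $\rho$ with positive leading coefficient $2c-2\sin^2\theta$ (since $c>1$). On the limiting edge $\theta=\pi/2$, $h(\cdot,\pi/2)$ is the perfect square $(2c-2)(\rho-\rho^*)^2$ with $\rho^*=(1-\sqrt{29-20\sqrt{2}})/(2(2-\sqrt{2}))\in(0,1)$; indeed $c$ is defined precisely as the value of $R(\rho,\pi/2)=(2\rho^2-4\rho+3)/(2(\rho^2-\sqrt{2}\rho+1))$ at its critical point, whose defining equation $(2-\sqrt{2})\rho^2-\rho+(3\sqrt{2}/2-2)=0$ has discriminant $29-20\sqrt{2}$. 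Extending strict positivity to the interior is then accomplished by substituting $w=\sin\theta$ and carrying out a discriminant analysis analogous to that used in Lemma~\ref{lemma1}.

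\emph{Part (ii).} The algebraic crux is that $y_{n+1}\geq 0$ in $P_6$ is equivalent to $x_n^2\geq g(y_n):=y_n^2(1-(y_n+\alpha)^2)/(y_n+\alpha)^2$, and from $g'(y)=2y(\alpha/(y+\alpha)^3-1)$ one obtains $\max_{y\in[0,1-\alpha]}g=g(\alpha(2^{1/3}-1))=(1-2^{-1/3})^3=\varepsilon^2$; hence $x_n\geq\varepsilon$ rules out transition to $P_0$ (except on a measure-zero boundary handled as in Remark~\ref{rem:y=0}). Since $\rho_n<1$ in $P_6$, $x$ grows monotonically (so the $\varepsilon$-bound is preserved within $P_6$) while $y$ decreases; a trajectory remaining in $P_6$ forever would converge to a fixed point forced to be $(\alpha,\alpha)$, but the derivative of $T_{S,L}$ there is a rotation by $\pi/4$ scaled by $1/\sqrt{2}$, mapping the local $P_6$-cone into $P_1$---a contradiction. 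The only admissible exits from $P_6$ are thus to $P_1$ (done) or to $P_2$; in the latter case Lemmas~\ref{lemma1}, \ref{P4} and (i) shepherd the iterate through the chain $P_2\to P_3\cup P_4\to P_4\to\ldots\to P_5\to P_6$, and combining the contractive factors from these lemmas with the expansive bound~(\ref{bound_2}) yields net contraction $c/2<1$ per full cycle, so the iterate approaches $(\alpha,\alpha)$ and eventually lands in the linearization neighborhood, where a single $P_6$-step carries it into $P_1$.

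The main obstacle will be (ii): making rigorous the interplay between the per-step regional combinatorics, the net contractive estimate, and the linearization at $(\alpha,\alpha)$, because the $x\geq\varepsilon$ bound is only directly preserved within consecutive $P_6$-steps and must be re-established (via closeness to $(\alpha,\alpha)$) after each excursion through the other regions.
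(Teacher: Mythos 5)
Your part~(i) is correct and essentially the paper's argument, your reduction of the bound~\eqref{bound_2} to a quadratic in $\rho$ followed by a discriminant analysis is exactly the paper's method (your identification of $\gamma$ as the maximum of the one-step ratio along $\theta=\pi/2$ is a nice observation the paper leaves implicit, though the actual root-finding is deferred rather than carried out), and your computation that $\max_y y^2\bigl(1-(y+\alpha)^2\bigr)/(y+\alpha)^2=\varepsilon^2$ is equivalent to the paper's verification that $\alpha+\varepsilon\tan\theta-\sin\theta\geq 0$. The genuine gap is in the structure of part~(ii). In $P_6$ one has $x_n<y_n\leq\alpha$, hence $\rho_n<1$, hence $x_{n+1}=\cos\theta_n<\alpha$ and $y_{n+1}=\alpha+(1-1/\rho_n)y_n<\alpha$, so $\rho_{n+1}<1$ as well: the next iterate lies strictly inside the unit disk and \emph{cannot} reach $P_2$ (nor $P_4$). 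Your ``admissible exit to $P_2$'' branch is therefore vacuous, but you have not shown it is vacuous, and the argument you supply for it does not close: the step that leaves $P_6$ already costs a factor $\gamma\approx 1.509$ by~\eqref{bound_2}, so even your shortest cycle accumulates at least $\gamma\cdot\tfrac12\cdot\gamma=\gamma^2/2\approx 1.14>1$, and nothing in your argument bounds the number of consecutive $P_6$-steps (each costing another factor $\gamma$) before the putative exit. There is no net contraction per cycle, so the ``shepherding'' claim fails as stated. The paper avoids all of this machinery: given $x_n\geq\varepsilon$, the orbit simply stays in $P_6\cup P_1\cup\{y=0\}$, the case $y=0$ is dispatched by Remark~\ref{rem:y=0}, and permanence in $P_6$ is excluded by monotonicity.

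Your linearization argument for excluding permanence in $P_6$ is also not airtight. The Jacobian at $(\alpha,\alpha)$ is indeed the rotation by $\pi/4$ scaled by $1/\sqrt2$, but in displacement coordinates $(u,v)=(x-\alpha,y-\alpha)$ it maps the tangent cone of $P_6$, namely $\{u\leq v\leq 0\}$, onto $\{v\leq u\leq 0\}$, and these two cones share the boundary ray $\{u=v\leq 0\}$. A first-order argument therefore does not immediately forbid the displacement vectors from remaining in the $P_6$-cone (they could hug the common ray up to the $o(|d_n|)$ error), so an additional quantitative step would be required. You already have everything needed for the paper's elementary route: within $P_6$ the sequence $x_k$ is strictly increasing and $y_k$ is strictly decreasing with $y_k\leq\alpha$; if the orbit never left $P_6$ both would converge, $x_{k+1}=x_k/\rho_k$ would force $\rho_k\to 1$, hence $y_k\to\alpha$, which is impossible for a strictly decreasing sequence bounded above by $\alpha$.
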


\begin{proof}
Let $\eta:=\left(\frac{5}{2}- \sqrt{2} + \frac{1}{2} \sqrt{-20 \sqrt{2} + 29}\right)^{-1}= \frac{5}{2}- \sqrt{2} -\frac{1}{2} \sqrt{-20 \sqrt{2} +
29} $. By a similar computation to the one in Lemma~\ref{lemma1}, one can prove that~\eqref{bound_2} holds if and only if
\begin{align}\label{function_f}
f(\rho_n,\theta_n):=&{\left(\eta\sin^2\theta_n-1\right)}\rho_n^{2}-{\left(2\eta\sin^2\theta_n-\sqrt{2}(\sin\theta_n+\cos\theta_n)\right)}\rho_n\nonumber\\
&-{\left(\sqrt{2} \cos\theta_n - \frac{3}{2}\right)} \eta  - 1< 0,
\end{align}
for all $\rho_n\in(0,1]$ and $\theta_n\in(\pi/4,\pi/2)$. Again, $f(\rho_n,\theta_n)$ is a quadratic function in terms of $\rho_n$, whose leading
coefficient $\eta\sin^2\theta_n-1$ is negative, since $\eta<1$. After the change of variable $w:=\sin\theta_n$, its discriminant is equal to
\begin{align*}
2  {\left(2  w^{4} - 3  w^{2}\right)} \eta^{2} + 4  \sqrt{1-w^2}  {\left(\sqrt{2} \eta^{2} w^{2} - \sqrt{2}{\left( w^{2} + 1\right)} \eta + w\right)}\\
 - 2  {\left(2  \sqrt{2} w^{3} - 2  w^{2} - 3\right)} \eta - 2
\end{align*}
Thus, the discriminant is equal to zero if and only if
\begin{align*}
2 & {\left(2  w^{4} - 3  w^{2}\right)} \eta^{2}  - 2  {\left(2  \sqrt{2} w^{3} - 2  w^{2} - 3\right)} \eta - 2\\
&=- 4  \sqrt{1-w^2}  {\left(\sqrt{2} \eta^{2} w^{2}
- \sqrt{2}{\left( w^{2} + 1\right)} \eta + w\right)}.
\end{align*}
Taking squares and collecting the terms in $w$ we obtain
\begin{align*}
0=\; & 16  \eta^{4} w^{8} - 32  \sqrt{2}\eta^{3} w^{7} - 16  {\left(\eta^{4} + 2  \eta^{3} - 4  \eta^{2}\right)} w^{6} + 16  {\left(3
\sqrt{2}
\eta^{3} - 2  \sqrt{2} \eta\right)} w^{5}\\
&+ 4{\left(\eta^{4} + 8  \eta^{2} + 4\right)} w^{4}- 16  {\left(5  \sqrt{2} \eta^{2} - \sqrt{2} \eta\right)} w^{3} - 8  {\left(\eta^{3} - 5
\eta^{2} + 2  \eta + 2\right)} w^{2} \\
&+ 32 \sqrt{2} \eta w + 4\eta^{2} - 24  \eta + 4.
\end{align*}
By substituting $u:=\sqrt{2}w$, we get
\begin{align*}
0=\;&\eta^{4} u^{8} - 4  \eta^{3} u^{7} - 2  {\left(\eta^{4} + 2  \eta^{3} - 4  \eta^{2}\right)} u^{6} + 4  {\left(3  \eta^{3} - 2
\eta\right)} u^{5} + {\left(\eta^{4} + 8  \eta^{2} + 4\right)} u^{4} \\
&- 8  {\left(5  \eta^{2} - \eta\right)} u^{3}  - 4  {\left(\eta^{3}- 5  \eta^{2} + 2  \eta + 2\right)} u^{2} + 32  \eta u+ 4 \eta^{2} -
24  \eta + 4
\end{align*}
This polynomial has four real roots, of which three are positive: $1$ (double) and $\sqrt{2}$. Therefore, the only roots of the discriminant are $\alpha$ and $1$. Since
$w=\sin\theta_n\in(\alpha,1)$, we have that $f(\rho_n,\theta_n)$ has always the same sign for all $\rho_n\in(0,1]$ and $\theta_n\in(\pi/4,\pi/2)$. By checking for instance that
$f\left(1,\frac{3}{8}\pi\right)<0$ we complete the first part of the proof.

\emph{To prove (i)} observe that
$$y_{n+1}=\alpha+\left(1-\frac{1}{\rho_n}\right)y_n\leq\alpha.$$
Moreover,
$$x_{n+1}-y_{n+1}=\cos\theta_n-\alpha+(1-\rho_n)\sin\theta_n.$$
Since $\rho_n\in\left(\frac{\alpha}{\sin\theta_n},1\right]$, then
$$x_{n+1}-y_{n+1}< \cos\theta_n+\sin\theta_n-2\alpha\leq \sqrt{2}-2\alpha=0,$$
and we are done, because $x_{n+1}=\cos\theta_n\in(0,\alpha)$.%

\emph{Finally, to prove~(ii)}, we  begin by showing that if $(x_n,y_n)\in P_6$ with $x_n\geq\varepsilon$, then $(x_{n+1},y_{n+1})\in P_6\cup P_1$ with
$x_{n+1}>x_n\geq\varepsilon$, for $\varepsilon:=\left(1-2^{-1/3}\right)^{3/2}$. Indeed, observe that
$$x_{n+1}=\frac{x_n}{\rho_n}=\cos\theta_n\in\, ]x_{n},\alpha[,$$
since $\theta_n\in\,]\pi/4,\pi/2[$ and $\rho_n<1$, which also implies,
$$y_{n+1}=y_n+\alpha-\sin\theta_n<y_n.$$
Furthermore, one has
$$y_{n+1}=\alpha+\rho_n\cos\theta_n\frac{\sin\theta_n}{\cos\theta_n}-\sin\theta_n\geq\alpha+\varepsilon\tan\theta_n-\sin\theta_n\geq 0,$$
since the function $g(\theta):=\alpha+\varepsilon\tan\theta-\sin\theta$ is nonnegative when $\theta\in\, ]\pi/4,\pi/2[$ (it attains its minimum $0$ at $\theta=\arcsin(2^{-1/6})$).
If $y_{n+1}=0$, then by Remark~\ref{rem:y=0} we have that $(x_{n+7},y_{n+7})\in P_1$. Otherwise, $(x_{n+1},y_{n+1})\in P_6\cup P_1$.

Now assume by way of contradiction that
$(x_{n+k},y_{n+k})\in P_6$ for all $k\in\N$. The sequences $x_n$ and $y_n$, being bounded and strictly increasing/decreasing (respectively), must converge to some points $x^*$ and
$y^*$. By the definition of the Douglas-Rachford iteration~\eqref{DR_2} this implies that $\rho_n$ converges to $1$, and then $y^*=\alpha$, which in turns implies $x^*=\alpha$; that
is, the sequence $(x_{n+k},y_{n+k})$ converges to $(\alpha,\alpha)$. Therefore
$$\alpha>y_{n+k}>y_{n+k+1}>\ldots\geq y^*=\alpha,$$
a contradiction.
\end{proof}

\begin{remark}\label{chaos}
The condition $x_n\geq \left(1-2^{-1/3}\right)^{3/2}$ in (ii) is required in order to avoid the sequence ever again hitting the region $P_0$, where a more complex (and somehow chaotic)
behavior occurs.
\end{remark}

The next result shows that the iterations cannot stay above the 45 degrees line for an infinite number of steps.

\begin{proposition}\label{prop2}
  If $(x_n,y_n)$ is a sequence generated by the Douglas-Rachford algorithm, then for all $m\in\N$ there exists some $k\geq m$ such that $x_k\geq y_k$; that is, $(x_k,y_k)\in P_0\cup P_1\cup P_2\cup P_3$.
\end{proposition}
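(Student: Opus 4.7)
My plan is to argue by contradiction. Suppose there exists $m \in \N$ with $x_k < y_k$ for all $k \geq m$. Since the iteration preserves the sign of the first coordinate ($x_{n+1} = x_n/\rho_n$) and we work throughout in the half-plane $x > 0$, the tail of the sequence must lie in $P_4 \cup P_5 \cup P_6$.

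The first step is to shepherd the tail into $P_6$. Lemma~\ref{P4} rules out remaining eternally in $P_4$ and forces a visit to $P_5$ in finite time, while Lemma~\ref{P56}(i) sends $P_5$ into $P_6$ in a single step. After reindexing I may assume $(x_k, y_k) \in P_6$ for every $k \geq m$.

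The second step is to show that $P_6$ is trapping once the region $\{x \geq y\}$ is forbidden. For $(x_n, y_n) \in P_6$ we have $\theta_n \in (\pi/4, \pi/2)$ and $\rho_n < 1$, whence
$$ x_{n+1} = \cos\theta_n \in (0, \alpha) \quad \text{and} \quad y_{n+1} = \alpha + (\rho_n - 1)\sin\theta_n \in (\alpha - 1, \alpha). $$
Both values lie in $(-\alpha, \alpha)$, so $x_{n+1}^2 + y_{n+1}^2 < 2\alpha^2 = 1$, ruling out $P_4$; and $y_{n+1} < \alpha$ rules out $P_5$. The standing assumption $x_{n+1} < y_{n+1}$ also excludes $P_0$ (where $y \leq 0 < x$) and $P_1$ (where $y \leq x$), forcing $(x_{n+1}, y_{n+1}) \in P_6$. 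By induction the full tail remains in $P_6$.

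To finish, I would invoke verbatim the concluding paragraph of the proof of Lemma~\ref{P56}(ii) — an argument that does not use the hypothesis $x \geq \varepsilon$: the sequences $x_n$ and $y_n$ are bounded and strictly monotone (increasing and decreasing respectively), so they converge to a joint limit forced by the iteration equations to be $(\alpha, \alpha)$; but $y_n$ strictly decreasing to $\alpha$ contradicts the constraint $y_n \leq \alpha$ imposed by $P_6$. The main obstacle is the trapping observation of the previous paragraph — recognising that the contraction $\rho_n < 1$ compresses the image of $P_6$ back into $P_0 \cup P_1 \cup P_6$, so that no return to the exterior regions $P_4, P_5$ is possible. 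Once this closure is in hand, the contradiction falls out of the lemma we already have.
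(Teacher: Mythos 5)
Your argument is correct and follows essentially the same route as the paper's: a contradiction built on the strict monotonicity of $y_n$ above the diagonal, the funnelling $P_4\to P_5\to P_6$ supplied by Lemma~\ref{P4} and Lemma~\ref{P56}(i), and the final observation that a strictly decreasing sequence trapped in $P_6$ (hence $\leq\alpha$) cannot converge to $\alpha$. The only difference is one of ordering: the paper derives $(x_n,y_n)\to(\alpha,\alpha)$ on the whole tail first and then localizes to $P_6$, whereas you first establish (correctly, and in more explicit detail than the paper) that $P_6$ is invariant under the contradiction hypothesis and then conclude inside $P_6$.
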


\begin{proof}
Assume by contradiction that $x_n<y_n$ for all $n\geq k$. Then $\theta_n\in(\pi/4,\pi/2)$ for all $n\geq k$, and thus,
\begin{equation}\label{eq0}
y_{n+1}=y_n+\alpha-\sin\theta_n<y_n.
\end{equation}
Hence, the sequence $y_n$ is strictly decreasing for all $n\geq k$, and being bounded from below by $0$, it must converge to some point $y^*$. Then, from~\eqref{eq0}, one must
have that $\sin\theta_n$ converges to $\alpha$.  From this we deduce that $\theta_n$ converges to $\pi/4$. This implies that $x_n$ converges to $\alpha$, and then $\rho_n$
converges to $1$. Thus, $y^*=\alpha$, and therefore, $(x_n,y_n)$ converges to $(\alpha,\alpha)$. Remembering that the sequence $y_n$ is strictly decreasing, from Lemma~\ref{P4}
and Lemma~\ref{P56}, we conclude that there is some $m\in\N$ such that $(x_n,y_n)\in P_6$ for all $n\geq m$. Being that $y_n$ strictly decreasing, it cannot converge to $\alpha$, and we
obtain yet again a contradiction.
\end{proof}

Now we have all the necessary ingredients to prove that if the initial point lies in $P_1$, the iteration again hits  $P_1$ after a finite number of steps, and moreover, the distance to the solution $(\alpha,\alpha)$ has by then  decreased.

\begin{theorem}\label{globalP1}
  If $(x_0,y_0)\in P_1$, then there is some $m\in\N$ such that $(x_m,y_m)\in P_1$, with
  \begin{equation}\label{eq7}
    |(x_k,y_k)-(\alpha,\alpha)|^2\leq \frac{\gamma^3}{4}|(x_0,y_0)-(\alpha,\alpha)|^2\leq 0.86 |(x_0,y_0)-(\alpha,\alpha)|^2.
  \end{equation}
  for all $k\leq m$, where $\gamma:=\frac{5}{2}- \sqrt{2} + \frac{1}{2}  \sqrt{-20  \sqrt{2} + 29}$.
\end{theorem}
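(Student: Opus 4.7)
The plan is to trace $(x_n,y_n)$ through one counterclockwise pass of the seven regions and compose the per-region bounds of Lemma~\ref{lemma1}, Lemma~\ref{P4} and Lemma~\ref{P56}. The target factor $\gamma^3/4$ splits naturally as $1/4$ from two contractive steps at the start, a cost of $1$ through the nonexpansive middle portion, and $\gamma^3$ from three expansive steps at the end, so everything boils down to following the itinerary $P_1\to P_1\cup P_2\to P_2\cup P_3\cup P_4\to\cdots\to P_4\to P_5\to P_6\to\cdots\to P_1$ and checking that the individual bounds combine as advertised.

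First I would verify the transition out of $P_1$. For $(x_0,y_0)\in P_1$, the constraints $\rho_0\leq 1$ and $\sin\theta_0\leq\alpha$ give
\[ y_1=\alpha+(\rho_0-1)\sin\theta_0\geq\alpha-(1-\rho_0)\alpha=\alpha\rho_0>0, \]
which together with $x_1=\cos\theta_0\in[\alpha,1)$ forces $(x_1,y_1)\in P_1\cup P_2$. Two applications of Lemma~\ref{lemma1} then yield $d_2^2\leq d_0^2/4$, and Lemma~\ref{lemma1}(i) places $(x_2,y_2)\in P_2\cup P_3\cup P_4$. Any further $P_2\cup P_3$ steps continue to contract by $1/2$ and, by parts (ii)-(iii) of Lemma~\ref{lemma1}, the iteration reaches $P_4$; Lemma~\ref{P4} is nonexpansive and delivers a point of $P_5$ after finitely many steps. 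Throughout this phase $d^2\leq d_0^2/4$.

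Next, Lemma~\ref{P56}(i) pushes $P_5$ into $P_6$ in one step with a $\gamma$ inflation, so $d^2\leq\gamma\,d_0^2/4$ upon entry to $P_6$. Lemma~\ref{P56}(ii) then guarantees a finite $P_6$ residence terminating in $P_1$; its hypothesis $x_n\geq\varepsilon$ is inherited from $x_0\geq\varepsilon$ through the monotone behaviour of the first coordinate in the preceding regions. Each $P_6$ step inflates $d^2$ by at most $\gamma$, so the residual task is to bound the $P_6$ residence time in such a way that the total $P_5\cup P_6$ expansion is at most $\gamma^3$.

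The last point is the main obstacle, since Lemma~\ref{P56}(ii) gives finiteness but no explicit cap on the iteration count. My proposed attack is to leverage the monotonicities established in the proof of Lemma~\ref{P56}(ii) — $x_n\uparrow$ toward $\alpha$ and $y_n\downarrow$ on $P_6$ — together with the identity $y_{n+1}-x_{n+1}=\alpha-\cos\theta_n+(1-\rho_n)\sin\theta_n$, to show that the gap $y_n-x_n$ decreases quickly enough that the exit condition $y_n\leq x_n$ must be achieved within two iterations of entering $P_6$. Combining the $1/4$ from the initial two steps, the nonexpansive middle stretch, and the $\gamma^3$ cap on $P_5\cup P_6$ then produces the estimate~\eqref{eq7}, while $(x_m,y_m)\in P_1$ is exactly the conclusion of Lemma~\ref{P56}(ii).
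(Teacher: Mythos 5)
Your itinerary and bookkeeping through $P_1\to P_1\cup P_2\to P_2\cup P_3\cup P_4\to\cdots\to P_4\to P_5\to P_6$ match the paper's proof step for step, and the accounting $\tfrac14\cdot\gamma\cdot\gamma\cdot\gamma=\tfrac{\gamma^3}{4}$ is exactly the right target. The genuine gap is the step you yourself flag as ``the main obstacle'': showing that the stay in $P_6$ lasts at most two iterations. Your proposed argument --- that the gap $y_n-x_n$ decreases ``quickly enough'' on $P_6$ because $x_n$ increases and $y_n$ decreases there --- cannot work as stated, because the two-step exit is simply false for general points of $P_6$: a point with small first coordinate (say $x_n=10^{-3}$, $y_n=0.65$) remains in $P_6$ for three iterations and then drops into $P_0$, which is precisely why Lemma~\ref{P56}(ii) carries the hypothesis $x_n\geq\varepsilon$ and asserts only finiteness with no step count. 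Relatedly, your claim that the hypothesis $x_n\geq\varepsilon$ is ``inherited from $x_0\geq\varepsilon$'' is not available here: the theorem assumes only $(x_0,y_0)\in P_1$, and $P_1$ contains points with arbitrarily small first coordinate, so neither the applicability of Lemma~\ref{P56}(ii) nor a bound on the residence time follows from your outline.

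What actually closes the gap in the paper is quantitative rather than qualitative. The accumulated contraction along the itinerary, together with $|(x_0,y_0)-(\alpha,\alpha)|\leq 1$, gives $(x_k-\alpha)^2+(y_k-\alpha)^2\leq\gamma/4$ at the first $P_6$-point (inequality~\eqref{eq11}); this yields $x_k\geq\alpha-\sqrt{\gamma}/2\approx 0.093$ and, after propagating through two steps, $x_{k+2}\geq 0.181$. The paper then inverts the iteration to express $x_k,y_k$ in terms of $x_{k+1},y_{k+1}$, derives the envelope $y_{k+2}\leq f(x_{k+2})$ with $f(x)=\tfrac32\sqrt2-\sqrt2\,x^2-\sqrt{1-x^2}-\sqrt{2x^4-3x^2+1}$, and shows via Sturm's theorem that $f(x)<x$ on $(0.14,\alpha)$, forcing $y_{k+2}\leq x_{k+2}$ and hence $(x_{k+2},y_{k+2})\in P_1$. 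None of this is recoverable from the monotonicity of $x_n$ and $y_n$ alone, so your proposal leaves the decisive step of the theorem unproved.
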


\begin{proof}
Since $\theta_0\in(0,\pi/4]$, one has $x_1=\cos\theta_0\in[\alpha,1)$, and also
$$y_1=\alpha+(\rho_0-1)\sin\theta_0\in[0,\alpha).$$
Thus, $(x_1,y_1)\in P_1\cup P_2$. By Lemma~\ref{lemma1} we have $(x_2,y_2)\in P_2\cup P_3\cup P_4$, and moreover,
$$|(x_2,y_2)-(\alpha,\alpha)|^2\leq \frac{1}{2}|(x_1,y_1)-(\alpha,\alpha)|^2\leq \frac{1}{4}|(x_0,y_0)-(\alpha,\alpha)|^2.$$
Furthermore, by (ii) and (iii) in Lemma~\ref{lemma1} and also by Lemma~\ref{P4}, there is some $k\in\N$ such that $(x_{k-1},y_{k-1})\in P_5$, having
$$|(x_{k-1},y_{k-1})-(\alpha,\alpha)|^2\leq \frac{1}{4}|(x_0,y_0)-(\alpha,\alpha)|^2.$$
Finally, by Lemma~\ref{P56}, we get
\begin{equation}\label{eq6}
  |(x_k,y_k)-(\alpha,\alpha)|^2\leq \frac{\gamma}{4}|(x_0,y_0)-(\alpha,\alpha)|^2,
\end{equation}
and $(x_k,y_k)\in P_6$. Then
$$y_{k+1}-\alpha=(\rho_k-1)\sin\theta_k<0\quad\text{and}\quad x_{k+1}=\cos\theta_k\in(0,\alpha),$$
since $\theta_k\in(\pi/4,\pi/2)$. Thus, $y_{k+1}<\alpha$. On the other hand, $y_{k+1}>0$ if and only if
$$\rho_k>1-\frac{\alpha}{\sin\theta_k},$$
which holds if $\rho_k>1-\alpha$. Because of~\eqref{eq6}, and taking into account that $|(x_0,y_0)-(\alpha,\alpha)|\leq 1$, one has
\begin{equation}\label{eq11}
  (x_k-\alpha)^2+(y_k-\alpha)^2\leq \frac{\gamma}{4},
\end{equation}
and hence, 
$$\rho_k\geq\sqrt{2}\left(\alpha-\sqrt{\frac{\gamma}{8}}\right)>1-\alpha.$$
Therefore, $x_{k+1}\in(0,\alpha)$ and $y_{k+1}\in(0,\alpha)$; whence, $(x_{k+1},y_{k+1})\in P_6\cup P_1$. If $(x_{k+1},y_{k+1})\in P_1$, then by Lemma~\ref{P56} we have
$$|(x_{k+1},y_{k+1})-(\alpha,\alpha)|^2\leq \gamma |(x_k,y_k)-(\alpha,\alpha)|^2\leq \frac{\gamma^2}{4}|(x_0,y_0)-(\alpha,\alpha)|^2,$$
which implies~\eqref{eq7}, and we are done. Otherwise, $(x_{k+1},y_{k+1})\in P_6$. 
Since
$$x_{k+1}=\frac{x_k}{\sqrt{x_k^2+y_k^2}},$$
one has
\begin{equation}\label{reverse_x}
  x_k=\frac{x_{k+1}}{\sqrt{1-x_{k+1}^2}}y_k,
\end{equation}
because $x_k,y_k>0$. Then,
$$y_{k+1}=\alpha+\left(1-\frac{1}{\sqrt{x_k^2+y_k^2}}\right)y_k=\alpha+y_k-\sqrt{1-x_{k+1}^2},$$
and thus,
\begin{equation}\label{reverse_y}
  y_k=y_{k+1}-\alpha+\sqrt{1-x_{k+1}^2}.
\end{equation}
Now $(x_k,y_k)\in P_6$ implies $y_k\leq\alpha$; whence,
$$y_{k+1}\leq\sqrt{2}-\sqrt{1-x_{k+1}^2}.$$
Since $(x_{k+1},y_{k+1})\in P_6$, then $x_{k+1},y_{k+1}>0$, and we have~\eqref{reverse_x} and~\eqref{reverse_y} for $k+1$ instead of $k$. Thus, the latter inequality yields
$$y_{k+1}\leq\sqrt{2}-\sqrt{1-\frac{x_{k+2}^2}{1-x_{k+2}^2}y_{k+1}^2},$$
which implies
$$\left(\sqrt{2}-y_{k+1}\right)^2\geq1-\frac{x_{k+2}^2}{1-x_{k+2}^2}y_{k+1}^2,$$
or, equivalently,
\begin{align*}
\left(\sqrt{2} x_{k+2}^{2} + y_{k+1} - \sqrt{2  x_{k+2}^{4} - 3  x_{k+2}^{2} + 1} - \sqrt{2}\right)&\Big(\sqrt{2} x_{k+2}^{2}+ y_{k+1} \\
&+ \sqrt{2  x_{k+2}^{4} - 3  x_{k+2}^{2} + 1} - \sqrt{2}\Big)\geq 0
\end{align*}
Hence, one must have one of the following two possibilities:
\begin{gather}
y_{k+1}\geq\sqrt{2}-\sqrt{2} x_{k+2}^{2} + \sqrt{2  x_{k+2}^{4} - 3  x_{k+2}^{2} + 1}, \label{eq8}\\
y_{k+1}\leq\sqrt{2}-\sqrt{2} x_{k+2}^{2} - \sqrt{2  x_{k+2}^{4} - 3  x_{k+2}^{2} + 1}.\label{eq9}
\end{gather}

Let us show\emph{} that~\eqref{eq8} is not possible. Indeed, $x_{k+2}=\cos\theta_{k+1}\in(0,\alpha)$ since $(x_{k+1},y_{k+1})\in P_6$. Thus,
$$\sqrt{2}-\sqrt{2} x_{k+2}^{2} + \sqrt{2  x_{k+2}^{4} - 3  x_{k+2}^{2} + 1}\geq\frac{1}{\sqrt{2}}+\sqrt{2  x_{k+2}^{4} - 3  x_{k+2}^{2} + 1}\geq\alpha,$$
and hence,~\eqref{eq8} would imply $y_{k+1}\geq\alpha$, a contradiction. Therefore,~\eqref{eq9} holds, as claimed, and using~\eqref{reverse_y} for $k+1$ we get
$$y_{k+2}-\alpha+\sqrt{1-x_{k+2}^2}\leq\sqrt{2}-\sqrt{2} x_{k+2}^{2} - \sqrt{2  x_{k+2}^{4} - 3  x_{k+2}^{2} + 1},$$
that is,
\begin{equation}\label{eq10}
  y_{k+2}\leq\frac{3}{2}\sqrt{2}-\sqrt{2} x_{k+2}^{2}-\sqrt{1-x_{k+2}^2} - \sqrt{2  x_{k+2}^{4} - 3  x_{k+2}^{2} + 1}.
\end{equation}
Consider the function
$f(x):=\frac{3}{2}\sqrt{2}-\sqrt{2} x^{2}-\sqrt{1-x^2} - \sqrt{2  x^{4} - 3  x^{2} + 1}$.

 We now show that $f(x)$ has only one fixed point in $(0,\alpha)$.
First, we have $f(x)=x$ if and only if
$$-\frac{3}{2}\sqrt{2}+\sqrt{2} x^{2}+\sqrt{1-x^2} + \sqrt{\left(x^{2} - 1\right)\left(2 \, x^{2} - 1\right)}+x=0,$$
which becomes, after the change of variable $x=z/\sqrt{2}$,
$$-\frac{3}{2}  \sqrt{2}+\frac{1}{2}  \sqrt{2} z^{2} + \sqrt{-\frac{1}{2}  z^{2} + \frac{1}{2}} \sqrt{-\frac{1}{2}  z^{2} + 1} \sqrt{2} + \frac{1}{2}  \sqrt{2} z + \sqrt{-\frac{1}{2}  z^{2} + 1} =0,$$
and multiplying by $\sqrt{2}$,
$$z^{2} +z-3+ \sqrt{-z^{2} + 2} {\left(\sqrt{-z^{2} + 1} + 1\right)} =0,$$
that is,
$$z^{2} +z-3=-\sqrt{-z^{2} + 2} {\left(\sqrt{-z^{2} + 1} + 1\right)}.$$
Taking squares we get
$$z^{4} + 2  z^{3} - 5  z^{2} - 6  z + 9 = z^{4} - 2  \sqrt{-z^2+1} {\left(z^{2} - 2\right)} - 4  z^{2} + 4,$$
and then,
$$2  z^{3} - z^{2} - 6  z + 5 = -2  \sqrt{-z^{2} + 1} {\left(z^{2} - 2\right)}.$$
Taking squares again we obtain
$$4  z^{6} - 4  z^{5} - 23  z^{4} + 32  z^{3} + 26  z^{2} - 60  z + 25 = -4  z^{6} + 20  z^{4} - 32  z^{2} + 16,$$
and grouping terms,
$$8  z^{6} - 4  z^{5} - 43  z^{4} + 32  z^{3} + 58  z^{2} - 60  z + 9=0,$$
which is equivalent to
$${\left(z - 1\right)} {\left(8  z^{5} + 4  z^{4} - 39  z^{3} - 7  z^{2} + 51  z - 9\right)}=0.$$
By Sturm's theorem, the polynomial $8  z^{5} + 4  z^{4} - 39  z^{3} - 7  z^{2} + 51  z - 9$ has only one root $\hat z$ in the interval $(0,1)$, which can be numerically computed:
$$\hat z\approx 0.186012649543.$$
Therefore, $f(x)$ has a unique fixed point $\hat x$ in $(0,\alpha)$, with
$$\hat x\approx 0.131530805878.$$
In particular, $f(x)<x$ for all $x\in(0.14,\alpha)$. Thus, by~\eqref{eq10}, if $x_{k+2}\in (0.14,\alpha)$, one has $y_{k+2}\leq f(x_{k+2})<x_{k+2}$. Hence, to show that $(x_{k+2},y_{k+2})\in P_1$
we only need to prove that $x_{k+2}>0.14$, since $(x_{k+1},y_{k+1})\in P_6$ yields $x_{k+2}<\alpha$.

Observe that~\eqref{eq11} implies
$$x_k\geq\alpha-\frac{\sqrt{\gamma}}{2}=:\Delta;$$
whence (see Figure~\ref{pic}),
\begin{figure}[ht!]
  \begin{center}
    \includegraphics[width=7cm]{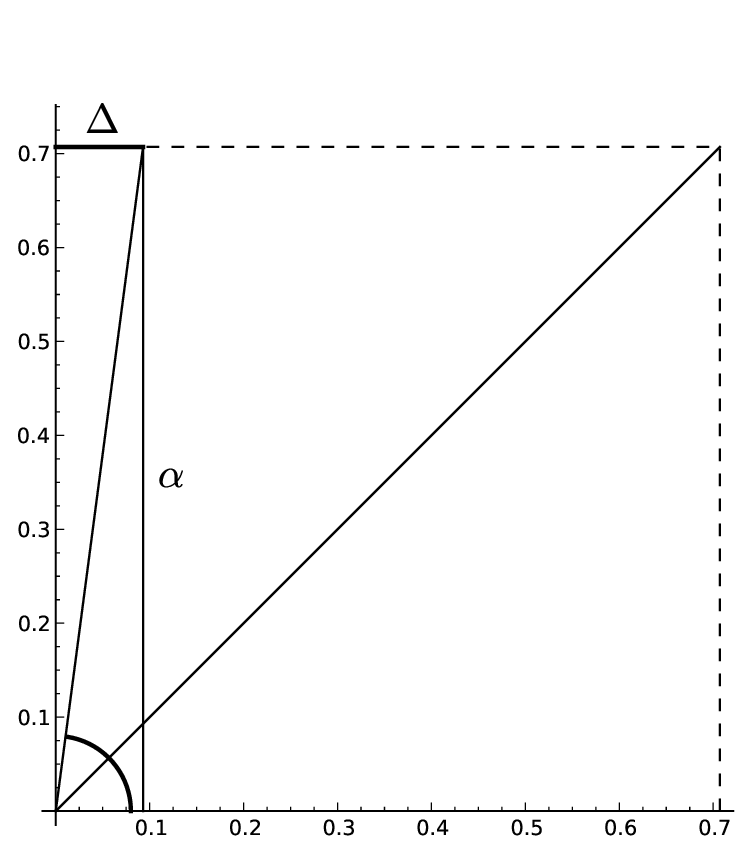}
  \end{center}\caption{The point $(x_k,y_k)$ belongs to the region $P_6$, with $x_k\geq\Delta$.}
  \label{pic}
\end{figure}
$$x_{k+1}=\cos \theta_k\geq\frac{\Delta}{\sqrt{\alpha^2+\Delta^2}}=:\Upsilon,$$
and similarly, since $(x_{k+1},y_{k+1})\in P_6$, one has
$$x_{k+2}=\cos \theta_{k+1}\geq\frac{\Upsilon}{\sqrt{\alpha^2+\Upsilon^2}}\approx 0.18124764381>0.14,$$
and hence, $(x_{k+2},y_{k+2})\in P_1$. Finally, by Lemma~\ref{P56}, one has
$$|(x_{k+2},y_{k+2})-(\alpha,\alpha)|^2\leq \gamma |(x_{k+1},y_{k+1})-(\alpha,\alpha)|^2\leq \frac{\gamma^3}{4}|(x_0,y_0)-(\alpha,\alpha)|^2,$$
and the proof is complete.
\end{proof}

The final two preliminary results tell us that the iterations hit $P_1$ after a finite number of steps whenever the initial point lies in $P_2$ or $P_3$ (with $x_0\leq 1$). Hence,
convergence  on starting within these regions will be a consequence of convergence within $P_1$.

\begin{corollary}\label{Cor_P3}
If $(x_0,y_0)\in P_3$ with $x_0\leq 1$, then there is some  integer $k\in\N$ such that $(x_k,y_k)\in P_1$.
\end{corollary}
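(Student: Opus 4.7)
My strategy is to chain together the already-established lemmas, marching the iterate counterclockwise through $P_3 \to P_4 \to P_5 \to P_6 \to P_1$, and to confirm that by the time it reaches $P_6$ its $x$-coordinate exceeds $\varepsilon$, so that Lemma~\ref{P56}(ii) can be applied to produce the required return to $P_1$.

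First, I would apply Lemma~\ref{lemma1}(iii) to send $(x_0,y_0)$ into $P_4$ in a single step; the contraction estimate~\eqref{bound_1} then yields $|(x_1,y_1)-(\alpha,\alpha)|^2 \leq \tfrac{1}{2}|(x_0,y_0)-(\alpha,\alpha)|^2$. The hypotheses $\alpha < y_0 \leq x_0 \leq 1$ place both coordinate deviations from $\alpha$ in the interval $(0,1-\alpha]$, so $|(x_0,y_0)-(\alpha,\alpha)|^2 \leq 2(1-\alpha)^2$ and consequently $|(x_1,y_1)-(\alpha,\alpha)|^2 \leq (1-\alpha)^2$.

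Next, Lemma~\ref{P4} guarantees that the iterate remains in $P_4$ with non-increasing distance to $(\alpha,\alpha)$ until some first index $k_0 \geq 2$ at which it enters $P_5$. Propagating the bound,
\[
(x_{k_0}-\alpha)^2 \leq |(x_{k_0},y_{k_0})-(\alpha,\alpha)|^2 \leq (1-\alpha)^2,
\]
so $x_{k_0} \geq 2\alpha - 1 = \sqrt{2}-1$. Lemma~\ref{P56}(i) then pushes the next iterate into $P_6$ with $x_{k_0+1} = x_{k_0}/\rho_{k_0}$; since $\rho_{k_0}\leq 1$ on $P_5$, we obtain $x_{k_0+1} \geq x_{k_0} \geq \sqrt{2}-1 \approx 0.414$, comfortably above the threshold $\varepsilon \approx 0.0937$. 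Lemma~\ref{P56}(ii) then furnishes the desired index $k \geq k_0+1$ with $(x_k,y_k)\in P_1$.

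The only non-bookkeeping moment is converting the hypothesis $x_0 \leq 1$ into a concrete lower bound on the $x$-coordinate at the instant of entry into $P_6$. Once one observes that the initial distance to $(\alpha,\alpha)$ is bounded by $\sqrt{2}(1-\alpha)$, the contraction factor $1/\sqrt{2}$ from~\eqref{bound_1} plus the nonexpansivity of Lemma~\ref{P4} delivers $x_{k_0} \geq \sqrt{2}-1$, leaving ample clearance above $\varepsilon$. I therefore anticipate no genuine obstacle beyond correctly sequencing the invocations of Lemmas~\ref{lemma1}, \ref{P4}, and~\ref{P56}.
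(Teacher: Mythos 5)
Your proof is correct, and it reaches the conclusion by a genuinely shorter route than the paper. Both arguments begin identically: the bound $|(x_0,y_0)-(\alpha,\alpha)|^2\leq 2(1-\alpha)^2$, Lemma~\ref{lemma1}(iii) to land in $P_4$ with the factor $\tfrac12$, and Lemma~\ref{P4} to march nonexpansively through $P_4$ into $P_5$ and then, via Lemma~\ref{P56}(i), into $P_6$. Where you diverge is the endgame. The paper does \emph{not} invoke Lemma~\ref{P56}(ii); instead it propagates the contraction constants to show that the squared distance upon entering $P_6$ is at most $\gamma(1-\alpha)^2<\gamma/4$, so that inequality~\eqref{eq11} holds, and then re-runs the detailed two-step analysis from the proof of Theorem~\ref{globalP1} (the reverse formulas~\eqref{reverse_x}--\eqref{reverse_y} and the fixed-point analysis of the auxiliary function $f$) to conclude that $P_1$ is reached within two further steps. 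You instead observe that the nonexpansive bound~\eqref{bound_3} forces $x_{k_0}\geq 2\alpha-1=\sqrt{2}-1$ at the moment of entry into $P_5$, that $\rho\leq 1$ on $P_5$ keeps the $x$-coordinate from decreasing, and hence that the entry point into $P_6$ satisfies $x\geq\sqrt{2}-1>\varepsilon$, at which point Lemma~\ref{P56}(ii) can be applied as a black box. Your version is cleaner for the statement as written, since the corollary only asserts eventual arrival in $P_1$; the paper's heavier argument buys the additional quantitative information that $P_1$ is reached within two steps of entering $P_6$ together with an explicit distance bound, which is the form needed in Theorem~\ref{globalP1} itself but is not required here. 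All the steps you cite check out: $P_5$ forces $x<\alpha<y$, so Lemma~\ref{P56}(i) applies, and $\sqrt{2}-1\approx 0.414$ clears the threshold $\varepsilon\approx 0.0937$ with room to spare.
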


\begin{proof}
Notice first that
$$|(x_0,y_0)-(\alpha,\alpha)|^2\leq 2(1-\alpha)^2.$$
By Lemma~\ref{lemma1} we have $(x_1,y_1)\in P_4$, and moreover,
$$|(x_1,y_1)-(\alpha,\alpha)|^2\leq \frac{1}{2}|(x_0,y_0)-(\alpha,\alpha)|^2.$$
By a similar argumentation to the one in the proof of Theorem~\ref{globalP1}, we can find some $k\in\N$ such that $(x_k,y_k)\in P_6$, having also
$$|(x_k,y_k)-(\alpha,\alpha)|^2\leq \frac{\gamma}{2}|(x_0,y_0)-(\alpha,\alpha)|^2;$$
whence,
$$|(x_k,y_k)-(\alpha,\alpha)|^2\leq\gamma(1-\alpha)^2<\frac{\gamma}{4}.$$
Therefore, we have that~\eqref{eq11} holds, and hence,
the rest of reasoning in the proof of Theorem~\ref{globalP1} remains valid, leading to either $(x_{k+1},y_{k+1})\in P_1$ or $(x_{k+2},y_{k+2})\in P_1$.
\end{proof}

\begin{corollary}\label{Cor_P2}
If $(x_0,y_0)\in P_2$ with $x_0\leq 1$, then there is some integer $k\in\N$ such that $(x_k,y_k)\in P_1$.
\end{corollary}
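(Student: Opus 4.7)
The plan is to mimic the strategy of Corollary~\ref{Cor_P3}: one iteration of the Douglas--Rachford map takes us out of $P_2$ into a favorable region, and a case split then either reduces to Corollary~\ref{Cor_P3} or continues along the $P_4\to P_5\to P_6$ path until the sequence lands in $P_1$.

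\emph{Step 1 (image of $P_2$).} The hypothesis $(x_0,y_0)\in P_2$ with $x_0\le 1$ forces $\alpha<x_0\le 1$ (using $x_0^2>1-\alpha^2=\alpha^2$), $y_0\in(0,\alpha]$, and $\rho_0\in(1,\sqrt{3/2}\,]$. Substituting into~\eqref{DR_2} then yields $x_1=\cos\theta_0\in(\alpha,1)$ and $y_1=\alpha+(1-1/\rho_0)\,y_0\in(\alpha,1)$, whence
$$|(x_1,y_1)-(\alpha,\alpha)|^2\le 2(1-\alpha)^2=3-2\sqrt{2}.$$

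\emph{Step 2 (case split).} Lemma~\ref{lemma1}(ii) places $(x_1,y_1)\in P_3\cup P_4$. If $(x_1,y_1)\in P_3$, then $x_1\le 1$ meets the hypothesis of Corollary~\ref{Cor_P3} and we are done. Otherwise $(x_1,y_1)\in P_4$; following the reasoning of Corollary~\ref{Cor_P3}, Lemma~\ref{P4} lets the sequence traverse $P_4$ to reach $P_5$ at some index $k_1$, with the nonexpansive bound~\eqref{bound_3} preserving $|(x_{k_1},y_{k_1})-(\alpha,\alpha)|^2\le 2(1-\alpha)^2$. Lemma~\ref{P56}(i) then places $(x_{k_1+1},y_{k_1+1})\in P_6$ with
$$|(x_{k_1+1},y_{k_1+1})-(\alpha,\alpha)|^2<2\gamma(1-\alpha)^2.$$

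\emph{Step 3 (closing the argument).} The algebraic inequality $2(1-\alpha)^2=3-2\sqrt{2}<1/4$ (equivalently $121<128$) gives $2\gamma(1-\alpha)^2<\gamma/4$, so bound~\eqref{eq11} from the proof of Theorem~\ref{globalP1} is satisfied at $(x_{k_1+1},y_{k_1+1})$. The remainder of that proof (from~\eqref{eq11} onward) therefore applies verbatim and produces some $k\in\mathbb{N}$ with $(x_k,y_k)\in P_1$. The main obstacle is the geometric analysis in Step 1 confining the image of $P_2\cap\{x_0\le 1\}$ to a small box around $(\alpha,\alpha)$; once that is established, the case split and the reduction to previous results are essentially routine.
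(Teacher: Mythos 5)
Your proof is correct, and its skeleton matches the paper's: split on whether Lemma~\ref{lemma1}(ii) sends $(x_1,y_1)$ into $P_3$ (reduce to Corollary~\ref{Cor_P3}) or into $P_4$, then track the distance to $(\alpha,\alpha)$ through $P_4\to P_5\to P_6$ via Lemmas~\ref{P4} and~\ref{P56}, and hand off to the tail of the proof of Theorem~\ref{globalP1} once \eqref{eq11} is secured. Where you genuinely diverge is in the one nontrivial quantitative step. The paper spends essentially its whole proof establishing $|(x_1,y_1)-(\alpha,\alpha)|^2\le 1-\sqrt{2/3}$ in the $P_4$ case: it inverts the iteration via \eqref{reverse_x}--\eqref{reverse_y} to obtain two curves bounding $y_1$ as a function of $x_1$, locates their intersection at $\hat x=\sqrt{2/3}$, and uses monotonicity and convexity to trap $(x_1,y_1)$ in a triangle. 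You instead observe that $\rho_0^2=x_0^2+y_0^2\le 1+\alpha^2=3/2$ on $P_2\cap\{x_0\le 1\}$, which immediately gives $x_1\in(\alpha,1)$ and $y_1\in\bigl(\alpha,\alpha(2-\sqrt{2/3})\bigr]\subset(\alpha,1)$, hence $|(x_1,y_1)-(\alpha,\alpha)|^2<2(1-\alpha)^2=3-2\sqrt{2}$. This forward bound is simpler and even slightly sharper ($0.172$ versus the paper's $0.183$), and since $3-2\sqrt{2}<1/4$ it still clears the threshold $\gamma/4$ after the factor-$\gamma$ loss in passing from $P_5$ to $P_6$, so \eqref{eq11} holds at the $P_6$ point exactly as needed. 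Two small points worth making explicit: the standing restriction $x_0>0$ is what upgrades $x_0^2>\alpha^2$ to $x_0>\alpha$, and the claim $y_1<1$ rests on $y_0\le\alpha$ together with $1-1/\rho_0\le 1-\sqrt{2/3}$; both are one-line remarks.
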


\begin{proof}
From Lemma~\ref{lemma1}(ii) we have $(x_1,y_1)\in P_3\cup P_4$. If $(x_1,y_1)\in P_3$, since $x_1=\cos\theta_0< 1$, we can apply Corollary~\ref{Cor_P3} and we are done. Otherwise
we have $x_1<y_1$. By~\eqref{reverse_x} and~\eqref{reverse_y}, one has
$$1\geq x_0=\frac{x_1(y_1-\alpha)}{\sqrt{1-x_1^2}}+x_1,$$
or, equivalently,
$$1-x_1\geq\frac{x_1(y_1-\alpha)}{\sqrt{1-x_1^2}}> 0.$$
This implies that
$$x_1^2(y_1-\alpha)^2\leq(1-x_1)^2(1-x_1^2).$$
That is,
$$\alpha-\left(\frac{1}{x_1}-1\right)\sqrt{1-x_1^2}\leq y_1\leq\alpha+\left(\frac{1}{x_1}-1\right)\sqrt{1-x_1^2}.$$
Moreover, since $y_0\leq\alpha$, from~\eqref{reverse_y} we get
$$\alpha\geq y_0=y_1-\alpha+\sqrt{1-x_1^2},$$
and hence,
$$y_1\leq 2\alpha-\sqrt{1-x_1^2}.$$
Consider the functions $f(x):=\alpha+\left(1/x-1\right)\sqrt{1-x^2}$ and $g(x):=2\alpha-\sqrt{1-x^2}$ for $x\in(0,1)$. One has $f(x)=g(x)$ if and only if $x$ verifies the equation
$$\frac{\sqrt{1-x^2}}{x}=\alpha,$$
whose unique solution in $(0,1)$ is $\hat x=\sqrt{2/3}$, with $f(\hat x)=g(\hat x)=\sqrt{2}-1/\sqrt{3}=:\hat y$. Observe that $f$ is strictly decreasing in $(0,1)$, while $g$ is
strictly decreasing, since
$$f'(x)=\frac{x^3-1}{x^2\sqrt{1-x^2}}<0\quad\text{and}\quad g'(x)=\frac{x}{\sqrt{1-x^{2}}}>0.$$
Putting all these facts together, we deduce that $y_1\leq\hat y$. Finally, noticing that $g$ is convex and remembering that $x_1<y_1$ with $(x_1,y_1)\in P_4$, one must have (see
Figure~\ref{pic1})
\begin{figure}[htb!]
\begin{center}
\includegraphics[height=7cm]{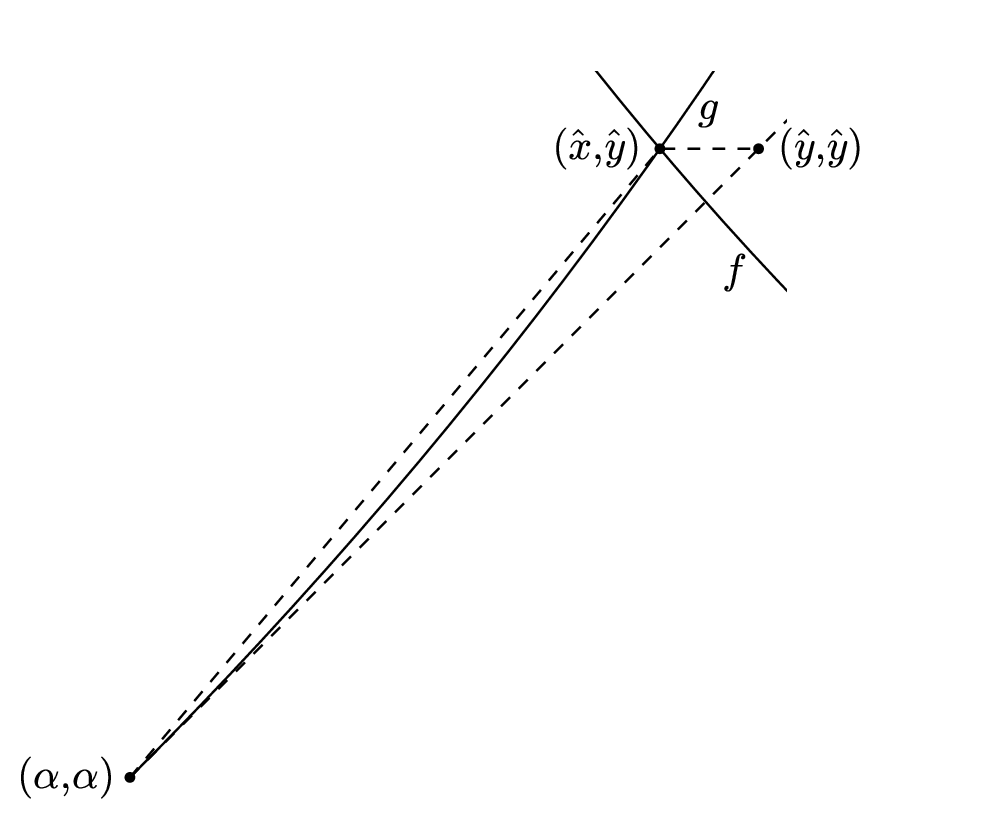}
\end{center}
\caption{The point $(x_1,y_1)$ lies inside the dotted triangle.}
\label{pic1}
\end{figure}
$$|(x_1,y_1)-(\alpha,\alpha)|^2\leq |(\hat y,\hat y)-(\alpha,\alpha)|^2=1-\sqrt{\frac{2}{3}}<\frac{\gamma}{4}.$$
This later inequality, together with Lemma~\ref{P4}, implies that~\eqref{eq11} holds for some $k\in\N$. By a similar argumentation to the the proof of Theorem~\ref{globalP1}, one
has either $(x_{k+1},y_{k+1})\in P_1$ or $(x_{k+2},y_{k+2})\in P_1$.
\end{proof}

We are finally ready to prove our main result.

\bigskip

\begin{proof}[Proof of Theorem~\ref{th:main}]
  Thanks to Lemma~\ref{P4}, Lemma~\ref{P56}, Corollary~\ref{Cor_P3} and Corollary~\ref{Cor_P2}, we may assume without loss of generality that $(x_0,y_0)\in P_1$ (observe that in the region $P_4$
  the distance to the point $(\alpha,\alpha)$ does not increase, while in the region $P_5$ the $x$-coordinate increases); see the left picture in Figure~\ref{fig:convergence}.
  Then Theorem~\ref{globalP1} applies, and there is
  some $m\in\N$ such that $(x_m,y_m)\in P_1$. Moreover, inequality~\eqref{eq7} holds for all $k\leq m$. Since $\gamma^3/4<1$, the sequence generated by the Douglas-Rachford
  iteration~\eqref{DR_2} must converge to the point $(\alpha,\alpha)$.
\end{proof}

\begin{remark}
In fact we have shown the convergence of the algorithm in a bigger region: if either $(x_0,y_0)\in P_1$, or $(x_0,y_0)\in P_4$ with $(x_0-\alpha)^2+(y_0-\alpha)^2\leq
(\varepsilon-\alpha)^2+(\sqrt{1-\varepsilon^2}-\alpha)^2$, for $\varepsilon:=\left(1-2^{-1/3}\right)^{3/2}$, see the picture in the left in Figure~\ref{fig:convergence}.
\end{remark}

\begin{figure}[htb!]
  \begin{center}
    \includegraphics[width=6cm]{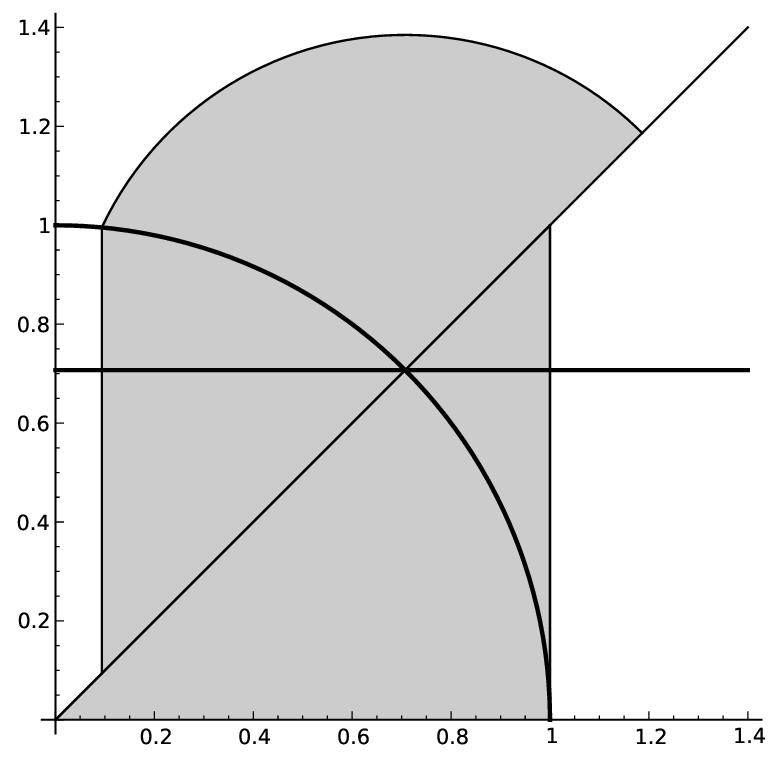}
    \includegraphics[width=6cm]{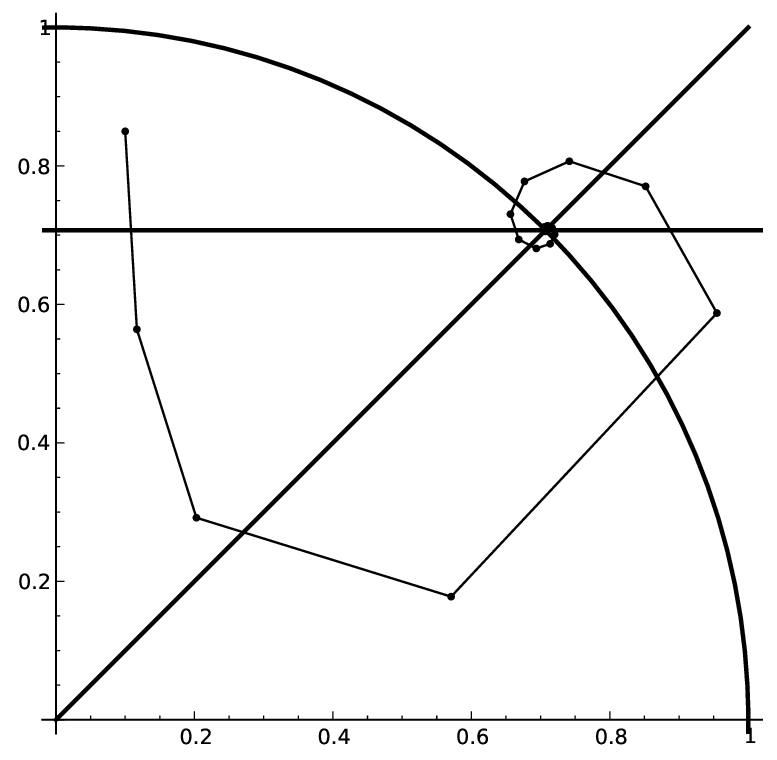}
  \end{center}
  \caption{The picture in the left shows the regions of convergence in Theorem~\ref{th:main} for the Douglas-Rachford algorithm. The picture in the right illustrates
  an example of a convergent sequence generated by the algorithm.}
  \label{fig:convergence}
\end{figure}

Finally, recalling Proposition~\ref{prop2}, Theorem~\ref{globalP1}, Corollary~\ref{Cor_P3} and Corollary~\ref{Cor_P2}, 
we can exhibit a more general dichotomy.

\begin{corollary}\label{Cor_F}
Given $(x_0,y_0)\in\R^2$ with $x_0>0$, consider the sequence generated by the Douglas-Rachford scheme~\eqref{DR_2}. Then, either the iteration visits $P_0$  infinitely often or it converges to the point $(\alpha,\alpha)$.
\end{corollary}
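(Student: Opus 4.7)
My plan is to prove the contrapositive of the dichotomy: assuming that the iteration visits $P_0$ only finitely often, I will show that $(x_n,y_n)\to(\alpha,\alpha)$. (The reverse direction is trivial, since any sufficiently small neighborhood of $(\alpha,\alpha)$ is disjoint from $P_0$.) Two elementary observations keep the hypotheses of the earlier results satisfied throughout the argument: since $x_{n+1}=x_n/\rho_n$ with $\rho_n>0$, the sign of $x_n$ is preserved, so the assumption $x_0>0$ gives $x_n>0$ for every $n$; and since $x_{n+1}=\cos\theta_n\in[-1,1]$, one also has $x_n\leq 1$ for every $n\geq 1$.

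Choose $N\geq 1$ so that $(x_n,y_n)\notin P_0$ for all $n\geq N$. Applying Proposition~\ref{prop2} with this $N$ produces an index $k\geq N$ with $(x_k,y_k)\in P_0\cup P_1\cup P_2\cup P_3$; the exclusion of $P_0$ forces $(x_k,y_k)\in P_1\cup P_2\cup P_3$, and $x_k\leq 1$ since $k\geq 1$. Depending on which of the three regions contains $(x_k,y_k)$, Theorem~\ref{globalP1}, Corollary~\ref{Cor_P2} or Corollary~\ref{Cor_P3} supplies an index $m_1\geq k$ with $(x_{m_1},y_{m_1})\in P_1$ (we can take $m_1=k$ if we are already in $P_1$).

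Starting from $(x_{m_1},y_{m_1})\in P_1$, I iterate Theorem~\ref{globalP1}: it produces a strictly increasing sequence $m_1<m_2<m_3<\cdots$ of return times to $P_1$ for which
\[
|(x_k,y_k)-(\alpha,\alpha)|^2\leq\left(\tfrac{\gamma^3}{4}\right)^{j-1}|(x_{m_1},y_{m_1})-(\alpha,\alpha)|^2
\]
whenever $m_j\leq k\leq m_{j+1}$, where $\gamma:=\tfrac{5}{2}-\sqrt{2}+\tfrac{1}{2}\sqrt{-20\sqrt{2}+29}$. Since $\gamma^3/4<1$, this uniform bound between consecutive returns to $P_1$ drives $(x_n,y_n)\to(\alpha,\alpha)$.

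The only step that requires care is the initial transition from $P_2$ or $P_3$ into $P_1$, which is precisely why the preliminary bound $x_k\leq 1$ is essential: without it Corollaries~\ref{Cor_P2} and~\ref{Cor_P3} do not apply directly. Once that transition is secured and the first visit to $P_1$ is in hand, the rest is a clean assembly of Proposition~\ref{prop2}, Theorem~\ref{globalP1}, and the two corollaries, and no new estimate is needed.
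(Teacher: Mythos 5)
Your proof is correct and takes essentially the same route as the paper, which offers no written argument for this corollary beyond citing Proposition~\ref{prop2}, Theorem~\ref{globalP1}, Corollary~\ref{Cor_P3} and Corollary~\ref{Cor_P2}: your text is precisely the careful assembly of those four results, including the two bookkeeping observations (the sign of $x_n$ is preserved, and $x_n\leq 1$ for $n\geq 1$) needed to keep their hypotheses satisfied. Nothing further is required.
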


Of course we believe the first case never occurs.

\section*{Acknowledgements}
We would like to thank Juan Carlos Ram\'irez for pointing out a typo in the statement of Theorem~2. This is the only change with respect to the previous version.


\begin{thebibliography}{99}

\bibitem{BCL02} {Bauschke, H.H., Combettes, P.L.,  Luke, D. R.:} Phase retrieval, error reduction algorithm,
and Fienup variants: a view from convex optimization. J. Opt. Soc. Am. A 19, No. 7, 1334--1345 (2002)

\bibitem{BS11} {Borwein, J.M., Sims, B.:} The Douglas-Rachford algorithm in the absence of convexity. Fixed-Point Algorithms for Inverse Problems in Science and Engineering,
 Springer Optimization and its Applications 49, 93--109 (2011)

\bibitem{BZ05} {Borwein, J.M., Zhu, Q.J.:}  Techniques of Variational Analysis. Springer, New York (2005)

\bibitem{ERT07} {Elser, V., Rankenburg, I., Thibault, P.:} Searching with iterated maps.
Proceedings of the National Academy of Sciences 104, 418--423 (2007)

\bibitem{GK90} {Goebel, K., Kirk, W.A.:}  Topics in Metric Fixed Point
Theory. Cambridge U. Press, Cambridge, UK (1990)

\bibitem{GE08} {Gravel, S., Elser, V.:} Divide and concur: A general approach
constraint satisfaction. Phys. Rev. E 78, 036706, 1--5 (2008)

\bibitem{LM79} {Lions, P.L., Mercier, B.:} Splitting algorithms for the
sum of two nonlinear operators. SIAM J. Numer. Anal. 16, No. 6, 964--979 (1979)

\bibitem{O67} {Opial, Z.:} Weak convergence of the sequence of successive
approximations for nonexpansive mappings. Bull. Am.
Math. Soc. 73, 591--597 (1967)

\end{thebibliography}
\end{document}